\newtheorem{theorem}{Theorem}[section]
\newtheorem{lemma}[theorem]{Lemma}
\newtheorem{proposition}[theorem]{Proposition}
\newtheorem{conj}[theorem]{Conjecture}
\newcommand{\ZZ}{\mathbf{ZZ}}
\newcommand{\QQ}{\mathbf{C}}
\begin{document}

\title{Resonance Graphs and Perfect Matchings of Graphs on Surfaces}

 \author{ Niko Tratnik\thanks{Faculty of Natural Sciences and Mathematics, University of Maribor, Slovenia. Email: niko.tratnik@um.si. Supported by  the Slovenian Research Agency.} \ and
 Dong Ye\thanks{Department of Mathematical Sciences and Center for Computational Sciences, Middle Tennessee State University,
Murfreesboro, TN 37132, USA. Email: dong.ye@mtsu.edu. Partially supported by a grant from the Simons Foundation (No. 359516).}}


\maketitle

\begin{abstract}

Let $G$ be a graph embedded in a surface and let $\mathcal F$ be a set of even faces of $G$ (faces bounded by a cycle of even length). The resonance graph of $G$ with respect to $\mathcal F$, denoted by  $R(G;\mathcal F)$, is a graph such that its vertex set is the set of all perfect matchings of $G$ and two vertices $M_1$ and $M_2$ are adjacent to each other if and only if the symmetric difference $M_1\oplus M_2$ is a cycle bounding some face in $\mathcal F$. It has been shown that if $G$ is a matching-covered plane bipartite graph, the resonance graph of $G$ with respect to the set of all inner faces is isomorphic to the covering graph of a distributive lattice. It is evident that the resonance graph of a plane graph $G$ with respect to an even-face set
$\mathcal F$ may not be the covering graph of a distributive lattice. In this paper, we show the resonance graph of a graph $G$ on a surface with respect to a given even-face set $\mathcal F$ can always be embedded into a hypercube as an induced subgraph. Furthermore, we show that the Clar covering polynomial of $G$ with respect to $\mathcal F$ is equal to the cube polynomial of the resonance graph  $R(G;\mathcal F)$, which generalizes previous results on some subfamilies of plane graphs.

\medskip

\noindent{\bf Keywords:} Perfect Matching, Resonance graph, Cube polynomial, Clar covering polynomial
\end{abstract}

\section{Introduction}

Unless stated otherwise, the graphs considered in this paper are simple and finite. Let $G$ be a graph with vertex set $V(G)$ and edge set $E(G)$. A {\em perfect matching} of $G$ is a set of independent edges of $G$ which covers all vertices of $G$. In other words, the edge induced subgraph of a perfect matching is a spanning 1-regular subgraph, also called {\em 1-factor}. Denote the set of all perfect matchings of a graph $G$ by $\mathcal M(G)$.

A \textit{surface} in this paper always means a closed surface which is a compact and connected 2-dimensional manifold without boundary. An embedding of a graph $G$ in a surface $\Sigma$ is an injective mapping which maps $G$ into the surface  $\Sigma$ such that a vertex of $G$ is mapped to a point and an edge is mapped to a simple path connecting two points corresponding to two end-vertices of the edge. Let $G$ be a graph embedded in a surface $\Sigma$. For convenience, a {\em face} of $G$ is defined as the closure of a connected component of $\Sigma \backslash G$. The boundary of a face $f$ is denoted by $\partial f$ and the set of edges on the boundary of $f$ is denoted by $E(f)$. 
An embedding of $G$ is a {\em 2-cell} or {\em cellular} embedding if every face is homomorphic to a close disc. Note that, every connected graph admits a 2-cell embedding in a closed surface. A cellular embedding of $G$ on $\Sigma$ is a {\em strong embedding} (or closed 2-cell embedding) if the boundary of every face is a cycle.  
Denote the set of all faces of a graph $G$ on a surface by $\CMcal F(G)$. A face $f$ of $G$ is {\em even} if it is bounded by an even cycle and a set of even faces is also called an \textit{even-face set}. A cycle is a {\em facial cycle} of $G$ if it is the boundary of a face. If there is no confusion, a graph $G$ on a surface always means an embedding of $G$ in the surface, and a face sometime means its boundary cycle.

Let $G$ be a graph embedded in a surface $\Sigma$ with a perfect matching $M$. A cycle $C$ of $G$ is {\em $M$-alternating} if the edges of
$C$ appear alternately between $M$ and $E(G)\backslash M$.  Moreover, a face $f$ of $G$ is \textit{$M$-alternating} if  $\partial f$ is an $M$-alternating cycle.
For a given set of even faces $\mathcal F\subseteq \CMcal F(G)$, the {\em resonance graph of $G$ with respect to $\mathcal F$} (or Z-transformation graph of $G$ \cite{zh-gu-ch}), denoted by $R(G; \mathcal F)$, is a graph with vertex set $\mathcal M(G)$ such that two vertices $M_1$ and $M_2$ are adjacent if and only if the symmetric difference $M_1 \oplus M_2=(M_1\cup M_2)\backslash (M_1\cap M_2)$ is the boundary of a face $f\in \mathcal F$, i.e. $E(f)=M_1\oplus M_2$.

Resonance graph was first introduced for hexagonal systems (also called benzenoid systems) which are plane bipartite graphs with only hexagons as inner faces in \cite{grun-82}, and reintroduced by Zhang, Guo and Chen \cite{zh-gu-ch} in the name Z-transformation graph, and has been extensively studied for hexagonal systems \cite{kl-zi-bri, khaled, skvz, zhang-lam-shiu}.  Later, the concept was extended to all plane bipartite graphs by Lam and Zhang \cite{lam-zhang, zhang-10} and fullerenes \cite{DTYZ,tr-zp-2}.

A graph $G$ is {\em elementary}  if the edges of $G$ contained in a perfect matching induce a connected subgraph (cf. \cite{LP}). An elementary bipartite graph is also matching-covered (or 1-extendable), i.e., every edge is contained in a perfect matching. It is known that a matching-covered graph is always 2-connected \cite{P}. So is an elementary bipartite graph. It has been shown in \cite{zhang} that a plane bipartite graph $G$ is elementary if and only if each face boundary of $G$ is an $M$-alternating cycle for some perfect matching $M$ of $G$ . 

\begin{theorem}[Lam and Zhang, \cite{lam-zhang}]\label{thm:plane}
Let $G$ be a plane elementary bipartite graph and $\mathcal F$ be the set of all inner faces of $G$. Then $R(G;\mathcal F)$ is the covering graph of a distributive lattice. 
\end{theorem}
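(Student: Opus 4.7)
The plan is to define a partial order on $\mathcal M(G)$, show that it forms a distributive lattice, and identify its Hasse diagram with $R(G;\mathcal F)$.

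First, I would exploit bipartiteness to set up the order. Write $V(G) = W \cup B$ and orient every edge of $G$ from its white to its black endpoint; call this orientation $D$. For each $M \in \mathcal M(G)$, let $D_M$ be obtained from $D$ by reversing every $M$-edge. A direct check shows that a face $f$ is $M$-alternating if and only if $\partial f$ is a directed cycle in $D_M$; in that case $\partial f$ is either clockwise (call $f$ a \emph{proper} $M$-face) or counterclockwise (an \emph{improper} $M$-face). Declare $M_1 \to M_2$ if $M_2 = M_1 \oplus E(f)$ for some proper $M_1$-face $f$, which automatically becomes improper under $M_2$; let $\leq$ be the reflexive transitive closure of $\to$.

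Next, I would establish that $\leq$ is a genuine partial order via a height function. Since $G$ is elementary, fix a reference matching $M_0$. For any $M$, orient the cycles of $M \oplus M_0$ so that the $M$-edges travel from white to black and the $M_0$-edges from black to white; each cycle then becomes a directed cycle in the plane with a well-defined winding number around every inner face. Let $h(M)$ be the sum of these winding numbers over all inner faces. A routine computation shows that a single-face rotation $M_1 \to M_2$ increases $h$ by exactly one, so $\to$ is acyclic and $\leq$ is antisymmetric.

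Then I would turn to the lattice structure. For $M_1, M_2 \in \mathcal M(G)$, write $M_1 \oplus M_2 = C_1 \cup \cdots \cup C_k$ as disjoint even cycles, each bounding a closed disk in the plane. On each $C_i$, exactly one of the two matchings contributes the ``lower'' alternation (the one whose edges traverse $C_i$ in the clockwise sense under the white-to-black convention); taking this choice on every $C_i$ and leaving the edges outside the $C_i$ unchanged produces the candidate meet $M_1 \wedge M_2$, and dually the join $M_1 \vee M_2$. One verifies that $\wedge$ and $\vee$ correspond to pointwise min and max of $h$ on the cycle-by-cycle data, so distributivity is immediate. Finally, $M_1 \lessdot M_2$ in the lattice iff $M_1 \oplus M_2 = E(f)$ for a single inner face $f \in \mathcal F$, which is exactly the adjacency relation of $R(G;\mathcal F)$.

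The crux is verifying that the proposed meet and join are honest perfect matchings: on each disk bounded by a cycle of $M_1 \oplus M_2$, the selected alternating edges must mesh consistently with the unchanged edges outside to cover every vertex exactly once. This requires a planarity-plus-bipartiteness argument essentially equivalent to a Thurston-style height function for plane bipartite graphs, and is where the hypothesis that $\mathcal F$ consists of the \emph{inner} faces is used (so that the outer face does not participate in rotations). Once this combinatorial lemma is in hand, the lattice axioms, distributivity, and the identification of the Hasse diagram with $R(G;\mathcal F)$ follow routinely.
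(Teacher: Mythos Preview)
The paper does not give its own proof of this theorem; it is quoted as a known result of Lam and Zhang (and independently Propp), with no argument supplied. So there is no in-paper proof to compare against. Your outline is in the spirit of the standard Lam--Zhang/Propp approach: orient edges by the bipartition, use a face-based height function built from winding numbers of $M\oplus M_0$ to certify that the rotation order is a partial order, and extract distributivity from a $\min/\max$ description of meet and join.

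There is, however, a genuine gap in your construction of $M_1\wedge M_2$. Choosing the ``clockwise'' alternation independently on each cycle $C_i$ of $M_1\oplus M_2$ always yields a perfect matching and a lower bound, but not in general the \emph{greatest} lower bound when the $C_i$ are nested. Concretely, suppose $C_1$ lies in the interior of $C_2$, with $M_1$ clockwise on $C_2$ and $M_2$ clockwise on $C_1$. Then
\[
h_{M_2}-h_{M_1}=\mathbf 1_{\mathrm{int}(C_2)}-\mathbf 1_{\mathrm{int}(C_1)}\ge 0,
\]
so $M_1\le M_2$ and the meet is $M_1$. Your recipe instead keeps $M_1$ on $C_2$ but flips $C_1$ to the $M_2$-alternation, producing a matching $M$ with $h_M=h_{M_1}-\mathbf 1_{\mathrm{int}(C_1)}$, strictly below $M_1$. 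Thus the claimed identification of your cycle-by-cycle choice with the pointwise minimum of $h$ fails.

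The standard repair is to work with the height function directly rather than through the cycles of $M_1\oplus M_2$: one shows that $h_M$ is characterised by a local edge-crossing condition (the jump across an edge is $\pm 1$ with sign determined by the bipartite orientation and by membership in $M$), that any function satisfying this local condition equals $h_M$ for a unique $M$, and that the pointwise minimum of two such functions again satisfies it. This yields the meet and, since $\min/\max$ on integer functions are distributive, the distributive-lattice structure. The identification of covers with single-face rotations then follows exactly as you indicate.
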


The above result was also obtained independently by Propp \cite{propp}.  Similar operations on other combinatorial structures such as, spanning trees, orientations and flows have been discovered to have similar properties \cite{ FK,propp}. Moreover, such distributive lattice structure is also established on the set of all perfect matchings of open-ended carbon nanotubes \cite{tr-zi}. However, if $G$ is not plane bipartite graph, the resonance graph of $G$ with respect to some set of even faces may not be the covering graph of a distributive lattice (cf. \cite{tr-zp-2}). It was conjectured in \cite{tr-zp-2} that every connected component of the resonance graph of a fullerene is a median graph and these graphs are a family of well-studied graphs (see \cite{br-kl-skr-2003, br-kl-skr-2006, kl-mu-skr}), containing the covering graphs of distributive lattices. Median graphs are a subfamily of cubical graphs \cite{agnar, ga-gr, ov}, which are defined as subgraphs of hypercubes, and have important applications   in coding theory, data transmission, and linguistics (cf. \cite{ga-gr, ov}). 

In this paper, we consider the resonance graphs of graphs embedded in a surface in a very general manner. 
The following is one of the main results.

\begin{theorem}\label{main1}
Let $G$ be a graph embedded in a surface and let $\mathcal F\ne \CMcal F(G)$ be an even-face set. Then every connected component of the resonance graph $R(G;{\mathcal F})$ is an induced cubical graph.
\end{theorem}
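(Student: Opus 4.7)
The plan is to fix a connected component $\mathcal C$ of $R(G;\mathcal F)$ together with a base perfect matching $M_0 \in V(\mathcal C)$, and to construct an explicit map $\phi: V(\mathcal C) \to \{0,1\}^{\mathcal F}$ realizing $\mathcal C$ as an induced subgraph of the hypercube $Q_{|\mathcal F|}$. For each $M\in V(\mathcal C)$ and $f\in\mathcal F$, I would let $\phi_f(M)$ be the parity of the number of times $f$ is rotated along some walk in $\mathcal C$ from $M_0$ to $M$. A short induction along the chosen walk then gives the identity
\[
M \;=\; M_0 \,\oplus\, \bigoplus_{f\in \mathcal F}\phi_f(M)\, E(f).
\]

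The crux of the proof is the following \emph{linear independence lemma}: the family $\{E(f) : f \in \mathcal F\}$ is linearly independent in $\mathbb F_2^{E(G)}$. This is exactly what is needed to make $\phi_f(M)$ independent of the walk chosen, because two walks from $M_0$ to $M$ using face-multisets $S_1, S_2$ produce the relation $\bigoplus_{f\in S_1\triangle S_2}E(f)=\emptyset$. I would prove the lemma via the cellular chain complex of the embedding: a standard rank count yields $\dim\ker\partial_2 = \chi - 1 + \dim H_1(\Sigma;\mathbb F_2)=1$ uniformly over all closed surfaces (orientable or not), so the kernel of $\partial_2: C_2(G;\mathbb F_2)\to C_1(G;\mathbb F_2)$ is one-dimensional and spanned by the ``fundamental class'' $\sum_{f\in\CMcal F(G)} f$. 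Because every $f\in\mathcal F$ is an even face and therefore bounded by a cycle, $E(f)=\partial_2(f)$ in $\mathbb F_2^{E(G)}$. Any non-trivial relation $\sum_{f\in\mathcal F} c_f E(f)=0$ would then lift to $\sum_{f\in\mathcal F} c_f f\in\ker\partial_2$, which, being nonzero, would have to equal the generator; but the hypothesis $\mathcal F \ne \CMcal F(G)$ produces an $f_0 \in \CMcal F(G)\setminus\mathcal F$ whose coefficient on the generator is $1$ while its coefficient on the left-hand side is $0$, a contradiction.

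Given the lemma, $\phi$ is well defined, and the displayed identity immediately yields injectivity. For the induced-subgraph property, two matchings $M_1, M_2 \in V(\mathcal C)$ are adjacent in $R(G;\mathcal F)$ iff $M_1\oplus M_2=E(f)$ for some $f\in\mathcal F$; by the identity together with the linear independence lemma this is equivalent to $\phi(M_1)\oplus\phi(M_2)$ being the $f$-th unit vector, i.e.\ to $\phi(M_1)$ and $\phi(M_2)$ being adjacent in $Q_{|\mathcal F|}$. The main obstacle is the linear independence lemma; the remainder is routine bookkeeping. It is the uniform one-dimensionality of $H_2(\Sigma;\mathbb F_2)$ over both orientable and non-orientable closed surfaces that explains why the bare hypothesis $\mathcal F \ne \CMcal F(G)$ already suffices, rather than a more delicate genus-dependent condition.
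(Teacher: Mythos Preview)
Your approach is correct and is essentially the paper's argument in a more algebraic dress. The map $\phi$ you define agrees with the paper's map $\ell$: once one fixes which side of the bipartition of the quotient graph $\mathcal H_i$ contains the base matching $M_0$, the paper's coordinate $\ell_i(M)$ is precisely your walk-parity $\phi_{f_i}(M)$, and the identity $M=M_0\oplus\bigoplus_f\phi_f(M)\,E(f)$ is what underlies the injectivity and induced-subgraph steps in Theorem~\ref{key}. Your linear independence lemma is exactly Lemma~\ref{osnova} rephrased in $\mathbb F_2^{E(G)}$.

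Where the two proofs genuinely differ is in establishing that lemma. The paper argues elementarily: any two faces sharing an edge have equal rotation-parity along a closed walk (since that edge lies on no third face), the dual of $G$ is connected, and $\mathcal F\ne\CMcal F(G)$ supplies a face of parity zero. Your route via $\ker\partial_2=H_2(\Sigma;\mathbb F_2)\cong\mathbb F_2$ is conceptually cleaner, but the rank count $\dim\ker\partial_2=\chi-1+\dim H_1(\Sigma;\mathbb F_2)=1$ presupposes that the embedding is cellular, so that the face--edge--vertex chain complex actually computes the homology of $\Sigma$; the paper does not assume cellularity. The conclusion $\dim\ker\partial_2=1$ nonetheless holds for arbitrary embeddings in a connected closed surface (it is equivalent to connectedness of the dual graph, which is exactly the paper's argument), so your proof is easily repaired---but as written the topological justification has this small gap.
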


A {\em 2-matching} of a graph $G$ embedded in a surface is a spanning subgraph consisting of independent edges and cycles. A {\em Clar cover} of $G$ is a 2-matching in which every cycle is a facial cycle. It has been evident that the enumeration of Clar covers of a molecular graph has physical meaning in chemistry and statistical physics. The {\em Clar covering polynomial} or {\em Zhang-Zhang polynomial} of graph $G$ embedded in a surface is a polynomial used to enumerate all Clar covers of $G$. Zhang-Zhang polynomial was introduced in \cite{zh-zh} for hexagonal systems. A  definition of Zhang-Zhang polynomial will be given in the next section.  Zhang et al.\,\cite{zhang-13} demonstrate the equivalence between the Clar covering polynomial of a hexagonal system and the cube polynomial of its resonance graph, which is further generalized to spherical hexagonal systems by Berli\v{c} et al.\,\cite{be-tr-zi} and to fullerenes \cite{tr-zp-2}. In this paper, we show the equivalence between the Zhang-Zhang polynomial of a graph $G$ embedded in a surface and the cube polynomial of its resonance graph as follows.

\begin{theorem}  \label{main2}
Let $G$ be a graph embedded in a surface and let $\mathcal F \ne \CMcal F(G)$ be a set of even faces. Then the  Zhang-Zhang polynomial of $G$ with respect to $\mathcal F$ is equal to the cube polynomial  of the resonance graph $ R(G;\mathcal F)$.
\end{theorem}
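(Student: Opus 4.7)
The plan is to prove Theorem \ref{main2} by constructing a bijection between the set of Clar covers of $G$ whose cyclic components all belong to $\mathcal{F}$ and the collection of induced hypercube subgraphs of $R(G;\mathcal{F})$, matching a Clar cover with exactly $k$ facial cycles to an induced copy of $Q_k$. Since the Zhang--Zhang polynomial is the generating polynomial of such Clar covers by number of resonant faces, while the cube polynomial of $R(G;\mathcal{F})$ is the generating polynomial of induced subcubes by dimension, any such bijection immediately yields the desired polynomial identity coefficient by coefficient.

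For the forward map, let $C$ be a Clar cover with resonant faces $f_1,\dots,f_k \in \mathcal{F}$ and independent edge set $I := C \setminus \bigcup_{i=1}^{k} E(f_i)$. The faces $f_i$ are pairwise vertex-disjoint by the definition of a 2-matching, and each even cycle $\partial f_i$ carries exactly two perfect matchings $M_i^0, M_i^1$. For every $\epsilon \in \{0,1\}^k$, the edge set $M_\epsilon := I \cup \bigcup_{i=1}^{k} M_i^{\epsilon_i}$ is a perfect matching of $G$, and $M_\epsilon \oplus M_{\epsilon'}$ equals the disjoint union $\bigsqcup_{i:\,\epsilon_i \ne \epsilon'_i} E(f_i)$. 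Thus $M_\epsilon$ and $M_{\epsilon'}$ are adjacent in $R(G;\mathcal{F})$ precisely when $\epsilon, \epsilon'$ differ in a single coordinate, because a disjoint union of two or more face boundaries is disconnected and therefore cannot coincide with the boundary of any single face in $\mathcal{F}$. Hence $\{M_\epsilon\}_\epsilon$ spans an induced copy of $Q_k$ inside $R(G;\mathcal{F})$.

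For the inverse map, given an induced $k$-cube $Q \subseteq R(G;\mathcal{F})$, fix a vertex $N_{0^k}$ of $Q$ together with its $k$ neighbors $N_{e_1},\dots,N_{e_k}$ in $Q$, and let $f_i \in \mathcal{F}$ be the face such that $N_{0^k} \oplus N_{e_i} = E(f_i)$. The crucial claim is that $f_1,\dots,f_k$ are pairwise vertex-disjoint; granted this, setting $I := N_{0^k} \setminus \bigcup_i E(f_i)$ produces a Clar cover $I \cup \bigcup_i \partial f_i$ whose image under the forward map recovers precisely the $2^k$ vertices of $Q$. To prove the claim I would exploit the hypercube embedding of $R(G;\mathcal{F})$ furnished by the proof of Theorem \ref{main1}, in which each edge of the resonance graph is labeled by the face whose flip realizes it, so that induced subcubes correspond to base vertices paired with sets of faces that can be flipped in any order starting from that vertex.

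The main obstacle is precisely this disjointness claim in the inverse direction. If $\partial f_i$ and $\partial f_j$ shared a vertex $v$, then the unique edge of $N_{0^k}$ at $v$ would lie in at most one of $E(f_i),E(f_j)$, so flipping $f_j$ first would destroy the $M$-alternating structure along $\partial f_i$, and the purported corner $N_{e_i+e_j}$ of $Q$ could not arise as a perfect matching obtained by a subsequent $f_i$-flip — contradicting that $Q$ has $2^k$ distinct vertices realizing every combination of flips. Turning this heuristic into a clean argument, via a $2$-face analysis inside $Q$ and a short induction on the dimension to verify that the forward and inverse assignments are mutually inverse, is where the technical work concentrates; once it is carried out, the equality of the Zhang--Zhang polynomial and the cube polynomial of $R(G;\mathcal{F})$ is immediate.
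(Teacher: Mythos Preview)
Your overall strategy---a bijection matching Clar covers with $k$ faces in $\mathcal F$ to induced copies of $Q_k$ in $R(G;\mathcal F)$---is exactly the paper's, and your description of the forward map is correct. You also correctly isolate the crux of the inverse direction: the pairwise vertex-disjointness of the faces $f_1,\dots,f_k$ attached to the edges at a chosen corner of an induced $k$-cube.

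Your heuristic for that step, however, contains a genuine error. You assert that if $\partial f_i$ and $\partial f_j$ share a vertex $v$, then the matching edge of $N_{0^k}$ at $v$ lies in \emph{at most one} of $E(f_i),E(f_j)$. The opposite is true: since both $f_i$ and $f_j$ are $N_{0^k}$-alternating and $v$ lies on both boundaries, the unique matching edge at $v$ must lie in $E(f_i)\cap E(f_j)$; in particular the two boundaries share an edge, not merely a vertex. So the argument cannot begin as you sketch. More seriously, your outline never invokes the hypothesis $\mathcal F\ne\CMcal F(G)$, and without it the disjointness claim is false (see the example in Figure~\ref{4_cyc}). The paper handles this via a separate $4$-cycle lemma (Lemma~\ref{lem:4cycle}): in any $4$-cycle $M_0M_1M_2M_3M_0$ of $R(G;\mathcal F)$ one first shows that \emph{opposite edges correspond to the same face}. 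This is where $\mathcal F\ne\CMcal F(G)$ enters---if the four faces were pairwise distinct, the identity $E(f_0)\oplus E(f_1)\oplus E(f_2)\oplus E(f_3)=\emptyset$ would force every edge of each $f_i$ to lie on some other $f_j$, so the four faces would tile the whole surface. Only after this does the alternating argument go through: knowing $f_i$ is $N_{e_j}$-alternating (because the edge $N_{e_j}N_{e_i+e_j}$ also corresponds to $f_i$), one looks at an endpoint of a maximal common subpath of $\partial f_i\cap\partial f_j$ and obtains a contradiction. Your appeal to the hypercube embedding of Theorem~\ref{main1} is a detour; the paper does not use it here, and it is not clear how it would yield the needed ``opposite edges, same face'' statement.
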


The paper is organized as follows: some detailed definitions are given in Section 2, the proofs of Theorem~\ref{main1} and    Theorem~\ref{main2} are given in Section~3 and Section~4, respectively. We conclude the paper with some problems as Section~5. 

\section{Preliminaries}

Let $G$ be a graph and let $u,v$ be two vertices of $G$. The {\em distance} between $u$ and $v$, denoted by $d_G(u,v)$  (or $d(u,v)$ if there is no confusion)
is the length of a shortest path joining $u$ and $v$.
A {\em median} of a triple of vertices $\{u, v, w\}$ of $G$ is a vertex $x$ that lies on a
shortest $(u,v)$-path, on a shortest $(u, w)$-path and on a shortest $(v, w)$-path. Note that $x$ could be
one vertex from $\{u, v, w\}$. A graph is a {\em median graph} if every triple of vertices has a
unique median. Median graphs were first introduced by Avann \cite{av}. Median graphs arise
naturally in the study of ordered sets and distributive lattices. A {\em lattice} is a poset such that any two 
elements have a greatest lower bound and a least upper bound.  The {\em covering graph} of a  lattice $\CMcal L$ is a graph whose vertex set
consists of all elements in $\CMcal L$ and two vertices $x$ and $y$ are adjacent if and only if either $x$ covers $y$ or $y$ covers $x$. 
A {\em distributive lattice} is a lattice in which the operations of the join and meet distribute  over each other.
It is known that the covering graph of a distributive lattice is a median graph but not vice versa \cite{DR}. 


The $n$-{\em dimensional hypercube} $Q_n$ with $n \geq 1$, is the graph whose vertices are all binary strings of length $n$ and two vertices are adjacent if and only if their strings differ exactly in one position. For convenience, define $Q_0$ to be the one-vertex graph.
The {\em cube polynomial} of a graph $G$  is defined  as follows,
$$\QQ(G,x)=\sum_{i\geq 0} \alpha_i (G)x^i,$$
where $\alpha_i(G)$ denotes the number of induced subgraphs of $G$ that are isomorphic to the $i$-dimensional hypercube. The cube 
polynomials of median graphs have been studied by Bre\v{s}ar, Klav\v{z}ar and \v{S}krekovski \cite{br-kl-skr-2003,br-kl-skr-2006}.

Let $H$ and $G$ be two graphs. A function $\ell: V(H) \rightarrow V(G)$ is called an \textit{embedding of $H$ into $G$} if $\ell$ is injective and, for any two vertices $x,y \in V(H)$, $\ell(x)\ell(y) \in E(G)$ if $xy \in E(H)$. If such a function $\ell$ exists, we say that $H$ can be \textit{embedded} in $G$. In other words, $H$ is a subgraph of $G$. Moreover, if $\ell$ is an embedding such that for any two vertices $x,y \in V(H)$, $ \ell(x)\ell(y) \in E(G)$ if and only if $xy \in E(H)$, then \textit{$H$ can be embedded in $G$ as an induced subgraph.} An embedding $\ell$ of graph $H$ into graph $G$ is called an \textit{isometric embedding} if for any two vertices $x,y \in V(H)$ it holds $d_H(x,y) = d_G(\ell(x),\ell(y))$.
A graph $H$ is ({\em induced}) \textit{cubical} if $H$ can be embedded into $Q_n$ for some integer $n\ge 1$ (as an induced subgraph), and $H$ is called a \textit{partial cube} if $H$ can be isometrically embedded into $Q_n$ for some integer $n\ge 1$. For more information and properties on cubical graphs, readers may refer to \cite{agnar,br-im-kl-mu-sk, br-kl-li-mo, ga-gr}. It holds that a median graph is a partial cube (in fact, even stronger result is true, i.e. median graphs are retracts of hypercubes, see \cite{bandelt}). Therefore,
we have the nested relations for these interesting families of graphs: 
\[\{\mbox{covering graphs of distributive lattices}\}\subsetneq \{\mbox{median graphs}\}\subsetneq\{\mbox{partial cubes}\} \subsetneq \]
\[ \subsetneq \{\mbox{induced cubical graphs}\} \subsetneq \{\mbox{cubical graphs}\}.\]

In the following, let $G$ be a graph embedded in a surface $\Sigma$ and let $f$ be a face bounded by a cycle of $G$. If $G$ has two perfect matchings
$M_1$ and $M_2$ such that the symmetric difference $M_1\oplus M_2$ is a cycle which bounds face $f$, then we say that $M_1$ can be obtained from $M_2$ by \textit{rotating} the edges of $f$. Therefore, two perfect matchings $M_1$ and $M_2$  of $G$  are adjacent in the resonance graph $R(G;\mathcal F)$ if and only if $M_1$ can be obtained from $M_2$ by rotating the edges of some face $f\in \mathcal F$. We sometimes also say that \textit{edge $M_1M_2$ corresponds to face $f$} or {\it face $f$ corresponds to edge $M_1M_2$}.

A {\em  Clar cover} of $G$ is a spanning subgraph $S$ of $G$ such that every component of $S$ is either the boundary of an even face or an edge. 
Let $\mathcal F\subseteq \CMcal F(G)$ be an even-face set. The {\em Zhang-Zhang polynomial of $G$ with respect to $\mathcal F$} (also called the Clar covering polynomial, see \cite{zh-zh}) is defined as follows,
\[\ZZ_{\mathcal  F }(G,x)=\sum_{k \geq 0}z_k (G,\mathcal F)x^k,\]
where $z_k(G,\mathcal F)$ is the number of Clar covers of $G$ with exact $k$ faces and all the $k$ faces belong to $\mathcal F$.
 Note that $z_0(G,\mathcal F)$ equals the number of of perfect matchings of $G$, i.e., the number of vertices of the resonance graph $R(G;\mathcal F)$.

\section{Resonance graphs and cubical graphs}

Let $G$ be a graph embedded in a surface and let $\mathcal F$ be a set of even faces of $G$ such that $\mathcal F\ne  \CMcal F(G)$. In this section, we investigate the resonance graph $R(G;\mathcal F)$ and show that every connected component of $R(G;\mathcal F)$ is an induced cubical graph.

\begin{lemma}\label{osnova}
Let $G$ be a graph embedded in a surface and let $\mathcal F\ne \CMcal F(G)$ be an even-face set. Assume that $C = M_0M_1\ldots M_{t-1}M_0$ is a cycle of the resonance graph $R(G;\mathcal F)$. Let $f_i$ be the face of $G$ corresponding to the edge $M_iM_{i+1}$ for $i\in \{0,1,..,t-1\}$ where subscripts take modulo $t$. Then every face of $G$ appears an even number of times in the face sequence $(f_0, f_1,...,f_{t-1})$. 
\end{lemma}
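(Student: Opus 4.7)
The plan is to encode the cycle $C$ as a symmetric-difference identity among face boundaries, and then use connectedness of the dual graph of the embedding to force every face to appear an even number of times in the sequence.

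\textbf{Step 1.} Going around $C$,
\[
\bigoplus_{i=0}^{t-1} (M_i \oplus M_{i+1}) = \emptyset,
\]
since each $M_i$ is listed exactly twice. Because $M_i \oplus M_{i+1} = E(f_i)$, this becomes $\bigoplus_{i=0}^{t-1} E(f_i) = \emptyset$. For each face $f$ of $G$, write $n_f := |\{i : f_i = f\}|$ (so $n_f = 0$ for $f \notin \mathcal F$) and set $S := \{f \in \mathcal F : n_f \text{ is odd}\}$. The identity then reduces to $\bigoplus_{f \in S} E(f) = \emptyset$, and the lemma is equivalent to showing $S = \emptyset$.

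\textbf{Step 2.} Introduce the dual multigraph $G^*$ of the embedding: its vertex set is $\CMcal F(G)$, and every $e \in E(G)$ contributes a dual edge joining the two faces on its two sides (a loop when both sides lie in one face). The identity $\bigoplus_{f \in S} E(f) = \emptyset$ says that for every $e \in E(G)$, the number of faces $f \in S$ with $e \in E(f)$ is even. Examining the two cases for the sides of $e$ yields: \emph{(i)} if both sides of $e$ lie in one face $f$, then $f \notin S$; \emph{(ii)} if the sides lie in distinct faces $f, g$, then $f \in S$ if and only if $g \in S$. Hence $S$ is a union of connected components of $G^*$ after removing all loops, and $S$ contains no loop-endpoint of $G^*$.

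\textbf{Step 3.} The graph $G^*$ is connected: for any two faces $f_1, f_2$, connect interior points by an arc in the connected surface $\Sigma$, placed in general position with respect to $G$ (avoiding vertices, crossing edges transversally at finitely many points); the sequence of face-crossings along the arc encodes a walk in $G^*$ from $f_1$ to $f_2$. Since removing loops does not affect connectivity, the loopless version of $G^*$ is also connected, so either $S = \emptyset$ or $S = \CMcal F(G)$. The hypothesis $\mathcal F \ne \CMcal F(G)$ combined with $S \subseteq \mathcal F$ excludes the latter, forcing $S = \emptyset$.

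\textbf{Main obstacle.} The subtle point is case \emph{(i)} of Step 2. Because $E(f)$ is defined as a set (not a multiset) of boundary edges, an edge $e$ with both sides in a single face $f$ contributes only one copy of itself to $E(f)$, and the parity condition therefore forces $f \notin S$. Without tracking this, one could miss that loop-endpoints of $G^*$ must be excluded from $S$. The hypothesis $\mathcal F \ne \CMcal F(G)$ is used precisely to exclude the remaining possibility $S = \CMcal F(G)$; after that, the conclusion follows immediately from the connectedness of $G^*$.
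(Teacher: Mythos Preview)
Your proof is correct and follows essentially the same approach as the paper: establish $\bigoplus_i E(f_i)=\emptyset$, propagate the parity of $\delta(f)$ across adjacent faces, and use $\mathcal F\ne\CMcal F(G)$ to anchor everything at parity zero. The paper phrases the propagation directly via shared edges rather than via the dual graph, and is terser about connectedness and about edges lying on only one face; your treatment of these points (Step~2(i) and the general-position arc argument in Step~3) is more explicit but does not change the underlying argument.
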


\begin{proof} Let $f$ be a face of $G$, and let $\delta(f)$ be the number of times $f$ appears in the face sequence $(f_0, f_1, \ldots, f_{t-1})$. It suffices
to show that $\delta(f)\equiv 0\pmod 2$.
Since  $C = M_0M_1\ldots M_{t-1}M_0$ is a cycle of $R(G;\mathcal F)$ and $f_i$ is the corresponding face of the edge $M_iM_{i+1}$, it follows that $M_i\oplus M_{i+1}= E(f_i)$ for $i\in \{0,1,..., t-1\}$.   So
\begin{equation}
E(f_0) \oplus E(f_1) \oplus \ldots \oplus E(f_{t-1})=\oplus_{i=0}^{t-1}(M_i\oplus M_{i+1})=\emptyset
\end{equation}
where all subscripts take modulo $t$.

Let $f$ and $g$ be two faces of $G$ such that $E(f)\cap E(g)\ne \emptyset$, and let $e\in E(f)\cap E(g)$. Since $e$ is contained by only $f$ and $g$, and the total number of faces in the sequence $(f_0, f_1,\ldots, f_{t-1})$ containing $e$ is even by (1), it follows that $\delta(f)+\delta(g)\equiv 0\pmod 2$. So $\delta(f)\equiv \delta(g) \pmod 2$. Therefore, all faces $f$ of $G$ have the same parity for $\delta(f)$.

Note that $\mathcal F\ne \CMcal F(G)$. So $G$ has a face $g\notin \mathcal F$. Hence $g$ does not appear in the face sequence. It follows that $\delta(g)=0$. Hence $\delta(f)\equiv \delta(g) \equiv 0\pmod 2$ for any face $f$ of $G$. This completes the proof.
\end{proof}

 The following proposition follows immediately from Lemma~\ref{osnova}.

\begin{proposition}\label{prop:bipartite}
Let $G$ be a graph embedded in a surface, and let $\mathcal F\ne \CMcal F(G)$ be a set of even faces. Then the resonance graph $R(G; \mathcal F)$ is bipartite.
\end{proposition}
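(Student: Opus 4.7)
The plan is to deduce bipartiteness by showing that every cycle of $R(G;\mathcal F)$ has even length, and to extract the length from Lemma~\ref{osnova}.

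First I would take an arbitrary cycle $C = M_0 M_1 \ldots M_{t-1} M_0$ in $R(G;\mathcal F)$ and, as in Lemma~\ref{osnova}, associate to each edge $M_iM_{i+1}$ the unique face $f_i \in \mathcal F$ whose boundary edges form $M_i \oplus M_{i+1}$. This gives a face sequence $(f_0, f_1, \ldots, f_{t-1})$ of length exactly $t$.

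Next I would observe that the length $t$ of the cycle equals $\sum_{f \in \CMcal F(G)} \delta(f)$, where $\delta(f)$ is the number of times $f$ appears in the face sequence. By Lemma~\ref{osnova}, $\delta(f) \equiv 0 \pmod 2$ for every face $f$ of $G$, so the sum is even, and hence $t$ is even. Since every cycle of $R(G;\mathcal F)$ has even length, the graph contains no odd cycle and is therefore bipartite.

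I do not expect any real obstacle here; the hypothesis $\mathcal F \neq \CMcal F(G)$ is already built into Lemma~\ref{osnova}, which is doing all of the combinatorial work. The only thing to be a little careful about is the (trivial) case when $R(G;\mathcal F)$ contains no cycles at all, in which case it is a forest and bipartiteness is automatic, so no separate argument is needed.
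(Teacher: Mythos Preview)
Your proposal is correct and mirrors the paper's own proof almost verbatim: take an arbitrary cycle, form the face sequence, apply Lemma~\ref{osnova} to see each face appears an even number of times, and conclude the cycle has even length. The only cosmetic difference is that you explicitly write the length as $\sum_f \delta(f)$, whereas the paper passes directly from ``each face appears an even number of times'' to ``$C$ is even''.
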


\begin{proof}
Let $C=M_0M_1\ldots M_{t-1}M_0$ be a cycle of $R(G;\mathcal F)$ and let $f_i$ be the face corresponding to the edge $M_iM_{i+1}$
for $i\in \{0,...,t-1\}$ (subscripts take modulo $t$). By Lemma~\ref{osnova}, every face $f$ of $G$ appears an even number of times in the sequence $(f_0,f_1,\ldots, f_{t-1})$. So $C$ is a cycle of even length. Therefore, $R(G;\mathcal F)$ is a bipartite graph.
\end{proof}

\begin{figure}[!htb]
\begin{center}
\begin{tikzpicture}[thick, scale=1]
\draw [black, thick] (-3.5,0)--(-2,0)--(-0.5,0);

\draw [black, thick] (-0.5,-1.5)--(-2,-1.5)--(-3.5,-1.5);

\draw [black, double] (-3.5,0)--(-3.5,-1.5);
\draw [black, double] (-2,0)--(-2,-1.5);
\draw [black, double] (-0.5,0)--(-0.5,-1.5);

\draw [black, thick] (4.05,0)--(5,-1.5)--(3.2,-1.5)--(4.05,0);

\filldraw [black] (-3.5,0) circle (3pt);
\filldraw [black] (-3.5,-1.5) circle (3pt);
\filldraw [black] (-2,0) circle (3pt);
\filldraw [black] (-2,-1.5) circle (3pt);
\filldraw [black] (-0.5,0) circle (3pt);
\filldraw [black] (-0.5,-1.5) circle (3pt);

\filldraw [black] (4.05,0) circle (3pt);
\filldraw [black] (3.2,-1.5) circle (3pt);

\filldraw [black] (5,-1.5) circle (3pt);

\node [] at (-2,0.5) { $f_3$};
\node [] at (-2.8,-0.75) { $f_1$};
\node [] at (-1.3,-0.75) { $f_2$};
\node [] at (-2,-2.75) {$G$};

\node [] at (4.05,0.35) { $M_1$};
\node [] at (2.8,-1.75) { $M_3$};

\node [] at (5.5,-1.75) { $M_2$};

\node [] at (5,-0.75) { $f_1$};
\node [] at (4.1,-2) { $f_3$};
\node [] at (3.1,-0.75) { $f_2$};
\node [] at (4.2, -2.75) {$R(G; \CMcal F(G))$};

\end{tikzpicture}
\caption{{\small A non-bipartite resonance graph where the double edges of $G$ form $M_1$.}}\label{non_bip}
\end{center}
\end{figure}
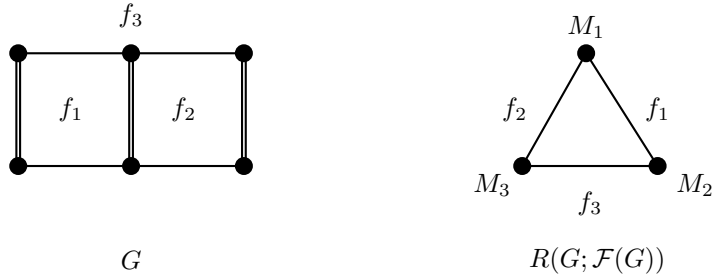

The above proposition shows that the resonance graph $R(G;\mathcal F)$ is bipartite if $\mathcal F\ne \CMcal F(G)$. However, if $\mathcal F= \CMcal F(G)$, then $R(G;\mathcal F)$ may not be bipartite. For example,  the graph $G$ on the left in Figure \ref{non_bip} is a plane graph with three faces $f_1$, $f_2$ and $f_3$. If $\mathcal F=\{f_1, f_2, f_3\}$,
then its resonance graph $R(G;\mathcal F)$ is a triangle as shown on the right in Figure~\ref{non_bip}.

It is known that a resonance graph $R(G;\mathcal F)$ may not be connected \cite{tr-zp-2}. In the following, we
focus on a connected component $H$ of  $R(G;\mathcal F)$, and always assume $\lbrace f_1, \ldots, f_k \rbrace$ to be the set of all the faces that correspond to the edges of $H$, which is a subset of $\mathcal F$. Denote the set of all the edges of $H$ that correspond to the face $f_i$ by $E_{i}$ for $i \in \lbrace 1, \ldots, k \rbrace$. In the rest of this section,  $H \backslash E_{i}$ denotes the graph obtained from $H$ by deleting all the edges from $E_i$. 

\begin{lemma}\label{lem:disconn}
Let $R(G;\mathcal F)$ be the resonance graph of a graph $G$ on a surface with
respect to a set of even faces $\mathcal F\ne \CMcal F(G)$, and let $H$ be a connected component of $R(G;\mathcal F)$.
Assume that $M_1M_2\in E_{i}$ where $E_i$ is the set of all the edges of $H$ corresponding to some face $f_i\in \mathcal F$. Then $M_1$ and $M_2$ belong to different components of
$H \backslash E_{i}$.
\end{lemma}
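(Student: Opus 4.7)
The plan is to argue by contradiction using Lemma~\ref{osnova}. Suppose that $M_1$ and $M_2$ lie in the same connected component of $H \setminus E_i$. Then there is a path
\[ P: M_1 = N_0, N_1, \ldots, N_s = M_2 \]
in $H \setminus E_i$ connecting them. Adjoining the deleted edge $M_1M_2$ to $P$ produces a closed walk, which contains a cycle $C$ of $R(G;\mathcal F)$ passing through the edge $M_1M_2$.

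The key observation is that, since every edge of $P$ lies in $H \setminus E_i$, no edge of $P$ corresponds to the face $f_i$. Thus in the face sequence of $C$, the face $f_i$ is contributed by the single edge $M_1M_2$ and by no other edge, so $\delta(f_i) = 1$. This directly contradicts Lemma~\ref{osnova}, which asserts that $\delta(f_i) \equiv 0 \pmod 2$ for every face of $G$ and every cycle of $R(G;\mathcal F)$. (Here I would take a moment to verify that $C$ can indeed be chosen as an honest cycle of $R(G;\mathcal F)$, not a closed walk with repetitions; either by taking $P$ to be a shortest $M_1$--$M_2$ path in $H \setminus E_i$, or simply by extracting a cycle from the closed walk that still contains the edge $M_1M_2$ together with a sub-path of $P$ whose edges are all in $H \setminus E_i$.)

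I do not expect a real obstacle here: the substantive content has already been packaged into Lemma~\ref{osnova}, and this lemma is essentially the natural corollary that each ``face edge class'' $E_i$ is a cut in $H$. The only mild care needed is making sure the cycle extracted from $P + M_1M_2$ still uses the edge $M_1M_2$ and has no other edge from $E_i$, so that $f_i$ appears exactly once in its face sequence. Once that is in place, applying Lemma~\ref{osnova} immediately yields the contradiction, and hence $M_1$ and $M_2$ must lie in distinct components of $H \setminus E_i$.
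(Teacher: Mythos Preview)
Your proposal is correct and follows essentially the same argument as the paper: assume $M_1,M_2$ lie in the same component of $H\setminus E_i$, take a path $P$ between them avoiding $E_i$, form the cycle $C=P\cup\{M_1M_2\}$, and observe that $f_i$ appears exactly once in the face sequence of $C$, contradicting Lemma~\ref{osnova}. Your extra care about extracting an honest cycle is fine but unnecessary, since a path $P$ in $H\setminus E_i$ already has no repeated vertices, so $P\cup\{M_1M_2\}$ is automatically a cycle.
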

\begin{proof}
Suppose to the contrary that $M_1$ and $M_2$ belong to the same component of $H \backslash E_i$. Then $H \backslash E_i$ has a path $P$ joining $M_1$ and $M_2$.  In other words, $H$ has a path $P$ joining $M_1$ and $M_2$ such that $E(P)\cap E_{i}=\emptyset$. Let $C=P\cup \lbrace M_1M_2 \rbrace$ be a cycle of $H$. Then $f$ appears exactly once in the face sequence corresponding to the edges in the cycle $C$, which contradicts
Lemma~\ref{osnova}. The contradiction implies that $M_1$ and $M_2$ belong to different components of $H \backslash E_i$.
\end{proof}

By Lemma~\ref{lem:disconn}, the graph $H \backslash E_i$ is disconnected for any face $f_i$ of $G$ which corresponds to the edges in $E_i$. Define the \textit{quotient graph $\mathcal H_i$ of $H$ with respect to $f_i$} to be a graph obtained from $H$ by contracting all edges in $E(H)\backslash E_i$ and replacing any set of parallel edges by a single edge. So a vertex of $\mathcal H_i$ corresponds to a
connected component of $H\backslash E_i$.

\begin{lemma}\label{lem:quo}
Let $R(G;\mathcal F)$ be the resonance graph of a graph $G$ on a surface with respect to an even-face set $\mathcal F\ne \CMcal F(G)$. Moreover, let $f_i$ be a face of $G$ that corresponds to some edge of a connected component $H$ of $R(G;\mathcal F)$. Then the quotient graph $\mathcal H_i$ with respect to $f_i$ is bipartite.
\end{lemma}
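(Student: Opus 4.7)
The plan is to show that every cycle of $\mathcal H_i$ has even length, which establishes bipartiteness. The strategy is to lift an arbitrary cycle of $\mathcal H_i$ to a closed walk in $H$ whose face sequence contains $f_i$ exactly as many times as the cycle's length, and then exploit Lemma~\ref{osnova}.

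Concretely, let $D = u_0 u_1 \ldots u_{t-1} u_0$ be a cycle of $\mathcal H_i$. Each vertex $u_j$ corresponds to a connected component $C_j$ of $H\setminus E_i$, and each edge $u_j u_{j+1}$ of $D$ arises from at least one edge of $E_i$ joining $C_j$ to $C_{j+1}$; for each $j$ choose one such edge $e_j = M_j^- M_j^+$ with $M_j^- \in C_j$ and $M_j^+ \in C_{j+1}$, which is well-defined by Lemma~\ref{lem:disconn}. Since $C_{j+1}$ is connected, it contains a path $P_{j+1}$ from $M_j^+$ to $M_{j+1}^-$. Concatenating $e_0, P_1, e_1, P_2, \ldots, e_{t-1}, P_0$ cyclically (indices mod $t$) produces a closed walk $W$ in $H$. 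By construction, the face sequence of $W$ lists $f_i$ exactly $t$ times --- once for each $e_j$ --- while every other entry is a face in $\mathcal F \setminus \{f_i\}$, because each edge of $P_j$ lies in $E(H)\setminus E_i$.

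It then remains to apply Lemma~\ref{osnova} to $W$. The only ingredient used in that proof is the telescoping identity $\bigoplus_j (M_j \oplus M_{j+1}) = \emptyset$, which is equally valid for an arbitrary closed walk; consequently the same conclusion holds, and every face of $G$ appears an even number of times in the face sequence of $W$. In particular $f_i$ appears an even number of times, so $t$ is even, which is exactly what we wanted to prove. If a literal application of Lemma~\ref{osnova} is preferred, one can instead decompose $W$ into a symmetric difference of simple cycles of $H$, apply Lemma~\ref{osnova} to each such cycle separately, and sum the resulting parities of the number of $E_i$-edges involved. I expect the principal subtlety to be the careful justification of this transfer from cycles to closed walks, but the telescoping identity makes the extension essentially immediate.
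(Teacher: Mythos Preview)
Your proof is correct and follows essentially the same route as the paper's: lift a cycle of $\mathcal H_i$ to a walk in $H$ that uses exactly $t$ edges of $E_i$, then invoke Lemma~\ref{osnova} to force $t$ to be even. One small simplification: since $D$ is a simple cycle in $\mathcal H_i$, the components $C_0,\ldots,C_{t-1}$ are pairwise distinct and each $P_j$ lies entirely in its own $C_j$, so your closed walk $W$ is in fact a simple cycle of $H$ --- hence Lemma~\ref{osnova} applies directly and the extra discussion about extending it to closed walks, while correct, is not needed.
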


\begin{proof}
Suppose to the contrary that $\mathcal H_i$ has an odd cycle. Then $H$ has a cycle $C$ which contains an odd number of edges corresponding to the face $f_i$. In other words, the face $f_i$ appears an odd number of times in the face sequence corresponding to edges of $C$,  which contradicts Lemma \ref{osnova}. Therefore, $\mathcal H_i$ is bipartite.
\end{proof}

Recall that $\lbrace f_1, \ldots, f_k \rbrace$ is the set of all the faces that correspond to the edges of a connected component $H$ of $\mathcal R(G;\mathcal F)$, and $\mathcal H_i$ is the quotient graph of $H$ with respect to the face $f_i$ for $i \in \lbrace 1, \ldots, k \rbrace$. By Lemma~\ref{lem:quo},
let $(A_i, B_i)$ be the
bipartition of $\mathcal H_i$, and let $\mathcal M_{A_i}$ and $\mathcal M_{B_i}$ be the sets of perfect matchings of $G$ which are vertices of connected components of $H\backslash E_i$ corresponding to vertices of $\mathcal H_i$ in $A_i$ and $B_i$, respectively. 
Define a function $\ell_i: V(H) \rightarrow \lbrace 0,1 \rbrace$ as follows, for any $M \in V(H)$,
\[\ell_i(M)=
\begin{cases}
0; & M \in \mathcal M_{A_{i}} \\
1; & M \in \mathcal M_{B_{i}}.
\end{cases}
\]
Further, define a function $\ell: V(H) \rightarrow \lbrace 0, 1 \rbrace^k$ such that, for any $M \in V(H)$,
\begin{equation}\label{mapping}
\ell(M) = (\ell_1(M), \ldots, \ell_k(M)).
\end{equation}

\begin{theorem}\label{key}
Let $G$ be a graph embedded in a surface, and let $H$ be a connected component of the resonance graph $R(G;\mathcal F)$ of $G$ with respect to an even-face set $\mathcal F\ne \CMcal F(G)$. Then the function $\ell: V(H) \rightarrow \{0, 1\}^k$ defines an embedding of $H$ into a $k$-dimensional hypercube as an induced subgraph.
\end{theorem}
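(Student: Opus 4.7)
The plan is to verify three things about $\ell$: (i) it sends each edge of $H$ to a Hamming-distance-1 pair in $\{0,1\}^k$, (ii) it is injective, and (iii) if $\ell(M_1)$ and $\ell(M_2)$ differ in exactly one coordinate then $M_1M_2\in E(H)$. Statements (i) and (ii) together show that $\ell$ is a graph embedding into $Q_k$, and (iii) upgrades it to an induced embedding.

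For (i), I would pick any edge $M_1M_2\in E(H)$ and let $j$ be the index with $M_1M_2\in E_j$. By Lemma~\ref{lem:disconn}, $M_1$ and $M_2$ lie in different components of $H\setminus E_j$; these components are adjacent vertices of the quotient $\mathcal H_j$, which is bipartite by Lemma~\ref{lem:quo}, so they lie on opposite sides of $(A_j,B_j)$. Hence $\ell_j(M_1)\ne\ell_j(M_2)$. For any $i\ne j$, the edge $M_1M_2$ is not in $E_i$, so $M_1$ and $M_2$ are joined by an edge inside $H\setminus E_i$, placing them in the same component and hence in the same bipartition class of $\mathcal H_i$, so $\ell_i(M_1)=\ell_i(M_2)$. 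Thus traversing any edge of $H$ flips exactly one coordinate of $\ell$.

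For (ii) and (iii), the unifying tool is the identity $M_1\oplus M_2=\bigoplus_{t=1}^{p} E(f_{i_t})$ for any path $M_1=N_0,N_1,\ldots,N_p=M_2$ in $H$ whose edges correspond to the faces $f_{i_1},\ldots,f_{i_p}$. By step (i), going from $M_1$ to $M_2$ along such a path flips coordinate $i$ a number of times equal to the multiplicity of $f_i$ in the sequence. For (ii), suppose $\ell(M_1)=\ell(M_2)$; take a path in the connected graph $H$; then every face appears with even multiplicity, so repeated edge sets cancel in the XOR and $M_1\oplus M_2=\emptyset$, giving $M_1=M_2$. For (iii), suppose $\ell(M_1)$ and $\ell(M_2)$ differ only in coordinate $j$; along any path in $H$ the face $f_j$ appears with odd multiplicity and every other face with even multiplicity, so the XOR telescopes to $E(f_j)$. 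Since $f_j\in\mathcal F$, this makes $M_1M_2$ an edge of $R(G;\mathcal F)$, and as $M_1,M_2$ both lie in the component $H$, this edge belongs to $E(H)$.

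The main obstacle is step (iii), which is what distinguishes an induced embedding from a mere embedding. The key is that the parity information captured by $\ell$ is sharp enough to reconstruct the symmetric difference $M_1\oplus M_2$ as an XOR of face edge sets, and that exactly one face with odd parity forces that symmetric difference to equal $E(f_j)$. This depends crucially on the hypothesis $\mathcal F\ne \CMcal F(G)$ entering through Lemma~\ref{osnova} (and its consequences, Lemmas~\ref{lem:disconn} and~\ref{lem:quo}); without it the quotient $\mathcal H_i$ need not be bipartite and the coordinate-flip interpretation of edges breaks down, as already signalled by the triangular example in Figure~\ref{non_bip}.
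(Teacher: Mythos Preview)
Your proposal is correct and follows essentially the same approach as the paper's proof: both rely on Lemmas~\ref{lem:disconn} and~\ref{lem:quo} to interpret $\ell$ coordinatewise, and both use the telescoping identity $M_1\oplus M_2=\bigoplus_t E(f_{i_t})$ along a path to handle injectivity and the induced property. The only difference is organizational: you establish edge-preservation first and then leverage it to get injectivity via the contrapositive, whereas the paper proves injectivity directly before edge-preservation; this makes your exposition slightly more streamlined but is not a substantive departure.
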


\begin{proof} 
If $G$ has no perfect matching, the result holds trivially. So, in the following, assume that $G$ has a perfect matching.

First, we show that the function $\ell: V(H)\rightarrow \{0,1\}^k$ defined above is injective, i.e., for any $M_1,
M_2\in V(H)$, it holds that $\ell(M_1)\ne \ell(M_2)$ if $M_1\ne M_2$.
Let $P= M_1X_1\ldots X_{t-1}M_2$ be a shortest path of $H$ between $M_1$ and $M_2$. Moreover, let $g_1, \ldots, g_t$ be the faces corresponding to the edges of $P$ such that $g_j$ corresponds to $X_{j-1}X_j$ for $j\in \{1,..., t\}$ (where $X_0=M_1$ and $X_t=M_2$). Note that, some faces $g_i$ and $g_{j}$ may be the same face
for different $i,j\in \{1,...,t\}$.
If every face of $G$ appears an even number of times in the sequence $(g_1, \ldots, g_s)$, then $M_2=M_1\oplus E(g_1)\oplus E(g_2)\oplus \ldots \oplus E(g_s)=M_1$, contradicting that $M_1\ne M_2$. Therefore, there exists a face appearing an odd number of times in the face sequence $(g_1,\ldots, g_s)$.  Without loss of generality, assume the face is $f_i$. By Lemma~\ref{lem:disconn}, two end-vertices of an edge in $E_{i}$ belong to different connected components of
$H\backslash E_{i}$. Since the face $f_i$ appears an odd number of times in the face sequence  $(g_1,\ldots, g_s)$, it follows that if we contract all edges of $P$ not in $E_i$, the resulting walk $P'$ of $\mathcal H_i$ joining the two vertices corresponding to the two components containing $M_1$ and $M_2$ has an odd number of edges. Note that  $\mathcal H_i$ is bipartite by Lemma~\ref{lem:quo}. So one of $M_1$ and $M_2$ belongs to $\mathcal M_{A_{i}}$ and the other belongs to $\mathcal M_{B_{i}}$. So $\ell_i(M_1)\ne \ell_i(M_2)$. Therefore, $\ell(M_1)\ne \ell(M_2)$.

Next, we show that $\ell$ defines an embedding of $H$ into a $k$-dimensional hypercube. It suffices to
show that for any edge $M_1M_2\in E(H)$, it holds that $\ell(M_1)$ and $\ell(M_2)$ differ in exactly one position.
Assume that $M_1M_2$ corresponds to a face $f_i\in \mathcal F$. In other words, the symmetric difference of
two perfect matchings $M_1$ and $M_2$ is the boundary of the face $f_i$.
For any $j \in \{ 1, \ldots, k \}$ and $j \neq i$, the edge $M_1M_2\in E(H\backslash E_j)$ because
$M_1M_2\in E_i$ and $E_i\cap E_j=\emptyset$. Therefore $M_1$ and $M_2$ belong to the same connected component of $H \backslash E_{j}$. Hence $\ell_j(M_1) = \ell_j(M_2)$ for any $j\in \{1,..., k\}$, $j\ne i$. Since $\ell(M_1)\ne \ell(M_2)$, it follows that $\ell(M_1)$ and $\ell(M_2)$ differ in exactly one position, the $i$-th position. Hence, $\ell$ defines
an embedding of $H$ into a $k$-dimensional hypercube.

Finally, we are going to show that $\ell$ embeds $H$ in a $k$-dimensional hypercube as an induced subgraph. It
suffices to show that, for any $M_1, M_2\in V(H)$, $M_1M_2\in E(H)$ if $\ell(M_1)$ and $\ell(M_2)$ differ in exactly one
position. Without loss of generality, assume that $\ell_i(M_1)=0$ and $\ell_i(M_2)=1$ but $\ell_j(M_1)=\ell_j(M_2)$ for any $j\in \{1,...,k\}\backslash \{i\}$. By the definition of the function $\ell$, we have $M_1\in \mathcal M_{A_i}$ and $M_2\in \mathcal M_{B_i}$.
Let $P$ be a path of $H$ joining $M_1$ and $M_2$. 
Then contract all edges of $P$ not in $E_i$  and the resulting walk $P'$ of $\mathcal H_i$ joins two vertices from different partitions of $\mathcal H_i$. So $P'$ has an odd number of edges. In other words, $|P\cap E_i|$ is odd. But, for any $j\in \{1,...,k\}$ and $j\ne i$, contract all edges of $P$ not in $E_j$ and the resulting walk $P''$  joins two vertices from the same partition of $\mathcal H_j$. So $|P\cap E_j|\equiv 0\pmod 2$.
Therefore, for any $e\in E(G)$, the edge $e$ is rotated an odd number of times along path $P$ if $e \in E(f_i)$, but an even number of times if $e \notin E(f_i)$. It follows that $M_1 \oplus M_2 = E(f_i)$, which implies $M_1M_2\in E(G)$. This completes the proof.
\end{proof}

Our main result, Theorem~\ref{main1}, follows directly from Theorem~\ref{key}. \medskip

\section{Clar covers and the cube polynomial }
In this section, we show that the Zhang-Zhang polynomial (or Clar covering polynomial) of a graph $G$ on a surface with respect to an even-face set
$\mathcal F\ne \CMcal F(G)$ is equal to the cube polynomial of
the resonance graph $R(G;\mathcal F)$, which generalizes the main results from papers \cite{zhang-13,be-tr-zi,tr-zp-2} on benzenoid systems, nanotubes (also called tubulenes), and fullerenes. The proof of the equivalence of two polynomials, our main result Theorem \ref{main2}, combines ideas from \cite{zhang-13,be-tr-zi,tr-zp-2} and \cite{khaled}. However, this  general setting of our result requires some new ideas and additional insights into the role and structure
of the resonance graph.  
The following essential lemma generalizes  a result of \cite{zhang-13} originally proved  for benzenoid systems.

\begin{lemma}\label{lem:4cycle}
Let $G$ be a graph embedded in a surface. If the resonance graph $R(G;\mathcal F)$ of $G$ with respect to an even-face set $\mathcal F\ne \CMcal F(G)$ contains a 4-cycle $M_0M_1M_2M_3M_0$,
then $M_0\oplus M_1 = M_2 \oplus M_3$ and $M_0\oplus M_3 = M_1 \oplus M_2$. Further,
the two faces bounded by $M_0\oplus M_1$ and $M_0\oplus M_3$ are disjoint.
\end{lemma}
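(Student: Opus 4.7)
The plan is to first apply Lemma~\ref{osnova} to the 4-cycle $M_0M_1M_2M_3M_0$. Let $f_i$ denote the face corresponding to edge $M_iM_{i+1}$ (indices mod $4$). By the lemma, each face must appear an even number of times in $(f_0,f_1,f_2,f_3)$, so either one face fills all four entries (which forces $M_2=M_0$, contradicting the 4-cycle) or two distinct faces each appear twice. Every ``consecutive'' pairing $f_i=f_{i+1}$ would give $M_i\oplus M_{i+2} = E(f_i)\oplus E(f_{i+1}) = \emptyset$, i.e.\ $M_i=M_{i+2}$, again contradicting the 4-cycle. The only admissible assignment is the diagonal one $f_0 = f_2 =: f$ and $f_1 = f_3 =: g$, from which $M_0\oplus M_1 = E(f) = M_2\oplus M_3$ and $M_0\oplus M_3 = E(g) = M_1\oplus M_2$ follow immediately.

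For disjointness, I first note $E(f)\ne E(g)$ (otherwise $M_0\oplus M_2 = E(f)\oplus E(g)=\emptyset$ would force $M_0=M_2$). Suppose, for contradiction, that $V(\partial f)\cap V(\partial g)\ne\emptyset$, and pick a shared vertex $u$. Since $M_0\oplus M_2$ is the symmetric difference of two perfect matchings, every vertex has degree at most $2$ in it. The degree of $u$ in $E(f)\oplus E(g)$ equals $4-2k$, where $k$ is the number of edges incident to $u$ lying in $E(f)\cap E(g)$; hence $k\ge 1$, and so $\partial f$ and $\partial g$ actually share an edge at $u$.

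The crux is then to fix a maximal common subpath $P$ of $\partial f\cap\partial g$ (which cannot be all of $\partial f$, since $E(f)\ne E(g)$) and examine an endpoint $p_0$ of $P$. Let $e$ be the edge of $P$ incident to $p_0$, and let $e_f$ and $e_g$ be the other edges of $\partial f$ and $\partial g$ at $p_0$, respectively; maximality of $P$ forces $e_f\ne e_g$. Both $\partial f$ and $\partial g$ are $M_0$-alternating (the second because $M_3=M_0\oplus E(g)$ is a perfect matching), so a short case analysis on whether $e\in M_0$ closes the argument. If $e\notin M_0$, alternation demands $e_f,e_g\in M_0$ at $p_0$, which is impossible since $M_0$ is a matching and $e_f\ne e_g$. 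If $e\in M_0$, then the matching edge of $M_1=M_0\oplus E(f)$ at $p_0$ is $e_f$; but the two edges of $\partial g$ at $p_0$, namely $e$ and $e_g$, are both outside $M_1$, so $\partial g$ fails to be $M_1$-alternating at $p_0$, contradicting that $M_2=M_1\oplus E(g)$ is a perfect matching.

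The main obstacle I foresee is the combinatorial handling around the maximal shared path: one must confirm that maximality really gives $e_f\ne e_g$ (using that $G$ is simple and that $\partial f,\partial g$ are cycles, so the two edges of $\partial f$ at $p_0$ are distinct) and dispose of the degenerate possibility that $\partial f\cap\partial g$ is a whole cycle, which collapses to $\partial f=\partial g$ as edge sets and hence to $E(f)=E(g)$, already ruled out.
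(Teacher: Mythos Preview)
Your proof is correct and the overall strategy---pair up the four faces, then analyse an endpoint of a maximal common subpath of $\partial f\cap\partial g$ via alternation---matches the paper's. There are two differences worth noting.

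For the first conclusion, you invoke Lemma~\ref{osnova} directly, whereas the paper re-derives the identity $E(f_0)\oplus E(f_1)\oplus E(f_2)\oplus E(f_3)=\emptyset$ and then argues topologically: if all four $f_i$ were distinct, every edge of each $f_i$ would lie on exactly two of them, so the union $f_0\cup f_1\cup f_2\cup f_3$ would be a closed sub-surface of $\Sigma$, forcing $\CMcal F(G)=\{f_0,\dots,f_3\}\subseteq\mathcal F$. Your route is shorter and purely combinatorial; the paper's argument is self-contained but leans on the embedding.

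For disjointness the two arguments are essentially the same. The paper observes that at an endpoint $v$ of a component of $\partial f_0\cap\partial f_1$ the common edge $e_1$ must lie in $M_1$ (else the two non-common edges both lie in $M_1$), and then by the symmetric reasoning $e_1\in M_0$ as well, contradicting $M_0=M_1\oplus E(f_0)$. Your case split on $e\in M_0$ versus $e\notin M_0$ amounts to the same contradiction unpacked. Your degree computation in $M_0\oplus M_2=E(f)\oplus E(g)$, showing that a shared vertex forces a shared edge, fills in a step the paper simply asserts (``every component of $\partial f_0\cap\partial f_1$ is a path on at least two vertices'').
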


\begin{proof}
Since $M_0M_1M_2M_3M_0$ is a $4$-cycle in the resonance graph $R(G;\mathcal F)$, let $f_i$ be the face of $G$ such that $E(f_i)= M_i \oplus M_{i+1}$ where $i\in \{0,1,2,3\}$ and subscripts take modulo 4.
Note that 
\begin{equation}\label{eq:1}
E(f_0) \oplus E(f_1) \oplus E(f_2) \oplus E(f_3) =(M_0 \oplus M_1) \oplus (M_1 \oplus M_2) \oplus (M_2 \oplus M_3) \oplus (M_3 \oplus M_0)  = \emptyset.
\end{equation}
Since $M_{i}\ne M_{i+2}$, it follows that  $f_i\ne f_{i+1}$, where $i\in \{0,1,2,3\}$ and subscripts take modulo 4. So $f_i\ne f_{i+1}$
and $f_i\ne f_{i-1}$. By (\ref{eq:1}), every edge on $f_i$ appears on another face $f_j$ with $j\ne i$. It follows that 
$E(f_i)\subseteq \cup_{j\ne i} E(f_j)$ for  $i,j\in \{0,1,2,3\}$. If $f_i$ is distinct from $f_j$ for any $j\ne i$, then all these faces together form a closed surface, which means that $\CMcal F(G)=\mathcal F$, contradicting $\mathcal F\ne \CMcal F(G)$. Therefore, $f_i=f_{i+2}$ for $i\in \{0,1,2,3\}$.
So it follows that $f_0=f_2$ and $f_1=f_3$. In other words, $M_0\oplus M_1 = M_2 \oplus M_3$ and $M_0\oplus M_3 = M_1 \oplus M_2$.

To finish the proof, we need to show that the faces $f_0$ and $f_1$ are disjoint. Suppose to the contrary that $\partial f_0 \cap \partial f_1 \ne \emptyset$. Note that $f_0\ne f_1$. So every component of $\partial f_0\cap \partial f_1$ is a path on at least two vertices. Let $v$ be an end vertex of some component of $\partial f_0\cap \partial f_1$. Therefore, $v$ is incident with three edges $e_1, e_2$ and $e_3$ such that $e_1, e_2\in E(f_0)$ but $e_1, e_3\in E(f_1)$. Since both $f_0$ and $f_1$ are $M_1$-alternating, it follows that $e_1\in M_1$. 
Note that $M_0=M_1\oplus E(f_0)$. So $e_1\notin M_0$. Since $f_1=f_3$, both $f_0$ and $f_3=f_1$ are $M_0$-alternating. Hence $e_1\in M_0$, contradicting $e_1\notin M_0$. This completes the proof.  
\end{proof}


\noindent {\bf Remark.} Lemma~\ref{lem:4cycle} does not hold if  $\mathcal F=\CMcal F(G)$. For example, 
the resonance graph $R(G;\CMcal F(G))$ of the plane graph in Figure~\ref{4_cyc} (left) has a 4-cycle $M_1M_2M_3M_4M_1$ which does not satisfy
the property of Lemma~\ref{lem:4cycle}, where $\CMcal F(G)=\{f_1,...,f_4\}$. \medskip

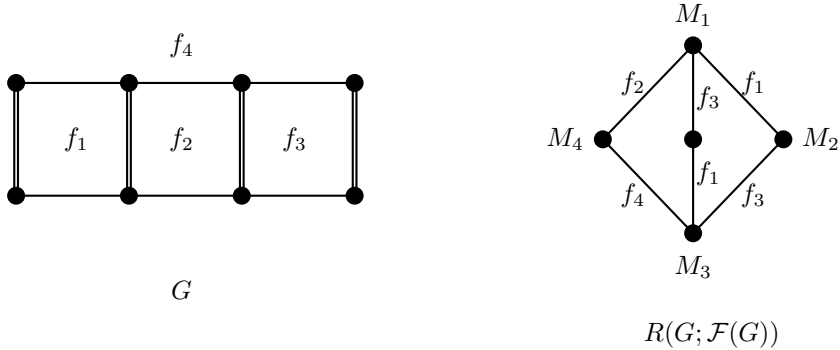
\begin{figure}[!htb]
\begin{center}
\begin{tikzpicture}[thick, scale=1]
\draw [black, thick] (-5,0)--(-3.5,0)--(-2,0)--(-0.5,0);

\draw [black, thick] (-0.5,-1.5)--(-2,-1.5)--(-3.5,-1.5)--(-5,-1.5);

\draw [black, double] (-3.5,0)--(-3.5,-1.5);
\draw [black, double] (-2,0)--(-2,-1.5);
\draw [black, double] (-0.5,0)--(-0.5,-1.5);
\draw [black, double] (-5,0)--(-5,-1.5);

\draw [black, thick] (4,0.5)--(5.2,-.75)--(4,-2)--(2.8,-.75)--(4,0.5);
\draw [black, thick] (4,0.5)--(4,-.75)--(4,-2);

\filldraw [black] (-5,0) circle (3pt);
\filldraw [black] (-5,-1.5) circle (3pt);
\filldraw [black] (-3.5,0) circle (3pt);
\filldraw [black] (-3.5,-1.5) circle (3pt);
\filldraw [black] (-2,0) circle (3pt);
\filldraw [black] (-2,-1.5) circle (3pt);
\filldraw [black] (-0.5,0) circle (3pt);
\filldraw [black] (-0.5,-1.5) circle (3pt);

\filldraw [black] (4,0.5) circle (3pt);
\filldraw [black] (2.8,-.75) circle (3pt);
\filldraw [black] (4,-0.75) circle (3pt);
\filldraw [black] (5.2,-.75) circle (3pt);
\filldraw [black] (4,-2) circle (3pt);

\node [] at (-2.8,0.5) { $f_4$};
\node [] at (-2.8,-0.75) { $f_2$};
\node [] at (-4.2,-0.75) { $f_1$};
\node [] at (-1.3,-0.75) { $f_3$};
\node [] at (-2.8,-2.75) {$G$};

\node [] at (4,0.9) { $M_1$};
\node [] at (2.3,-.75) { $M_4$};
\node [] at (5.7,-.75) { $M_2$};
\node [] at (4,-2.45) { $M_3$};

\node [] at (4.8,0) { $f_1$};
\node [] at (4.8,-1.5) { $f_3$};
\node [] at (3.2,-1.5) { $f_4$};
\node [] at (3.2,0) { $f_2$};
\node [] at (4.2,-0.2) { $f_3$};
\node [] at (4.2,-1.2) { $f_1$};
\node [] at (4.25, -3.35) { $R(G;\CMcal  F(G))$};

\end{tikzpicture}
\caption{{\small The resonance graph  where the double edges of $G$ form $M_1$.}}\label{4_cyc}
\end{center}
\end{figure}

Now, we are going to prove Theorem~\ref{main2}. \medskip

\noindent{\bf Proof of Theorem~\ref{main2}.} Let $G$ be a graph on a surface and let $ R(G;\mathcal F)$ be the resonance graph of 
$G$ with respect to  $\mathcal F$. 
If $G$ has no perfect matching, then the result holds trivially. So, in the following, we always assume that 
$G$ has a perfect matching.

For an
nonnegative integer $k$, let   $\mathcal Z_k(G,\mathcal F)$ be the set of all Clar covers of $G$ with exactly $k$ faces such that all these faces are included in $\mathcal F$, and let $\mathcal Q_k(R(G;\mathcal F))$ be the set of all labeled subgraphs of $R(G;\mathcal F)$ that are isomorphic to the $k$-dimensional hypercube.
For a Clar cover  $S \in \mathcal Z_k(G;\mathcal F) $, let  $M_1,M_2,\ldots, M_t$ be all the perfect matchings
of $G$ such that all faces in $S$ are $M_i$-alternating and
all isolated edges of $S$ belong to $M_i$ for all $i\in \{1,..., t\}$.
Define a mapping
$$m_k:\, \mathcal Z_k(G,\mathcal F) \longrightarrow \mathcal Q_k( R(G;\mathcal F))$$
such that $m_k(S)$ is the subgraph of $R(G;\mathcal F)$ induced by the vertex set $\{M_1,M_2,\ldots, M_t\}$.
Since the subgraph induced by $\{M_1,\ldots, M_t\}$ is unique, the mapping $m_k$ is well-defined, which follows from  the following claim. \medskip

\noindent{\bf Claim~1.} {\sl For each  Clar cover $S \in \mathcal Z_k(G;\mathcal F) $, the image $m_k(S) \in \mathcal Q_k( R(G;\mathcal F))$.} \medskip

\noindent{\it Proof of Claim~1.}
It is sufficient to show that $m_k(S)$ is isomorphic to the $k$-dimensional hypercube $Q_k$. Let
$f_1,f_2,\ldots,f_k$ be the faces in the Clar cover $S$. Then $\{f_1,\ldots, f_k\}\subseteq \mathcal F$. 
So each $f_i$ with $i\in \{1,\ldots, k\}$ is even and hence has two perfect matchings labelled by ``0" and ``1" respectively. For any vertex $M$ of $m_k(S)$, let
$b(M) =(b_1,b_2,\ldots, b_k)$, where $b_i=\alpha $ if $M\cap E(f_i)$ is the perfect matching of $\partial f_i$ with label $\alpha \in \{0,1\}$  for each $i\in \{1,2,\ldots,k\}$.
It is obvious that $b:V(m_k(S))\rightarrow V(Q_k)$ is a bijection. For $M' \in V(m_k(S))$, let
$b(M')=(b_1',b_2',\ldots, b_k')$. If $M$ and $M'$ are adjacent in $m_k(S)$ then $M\oplus M'=E(f_i)$ for some $i\in\{ 1,2,\ldots, k\}$. Therefore, $b_j=b_j'$ for each $j \neq i$ and $b_i \neq b_i'$, which implies
 $(b_1,b_2,\ldots,  b_k)$ and $(b_1',b_2',\ldots, b_k')$ are adjacent in $Q_k$. Conversely, if
  $(b_1,b_2,\ldots, b_k)$ and $(b_1',b_2',\ldots, b_k')$ are adjacent in $Q_k$, it follows that $M$ and
  $M'$ are adjacent in $m_k(S)$. Hence $b$ is an isomorphism between $m_k(S)$ and $Q_k$. This completes the
  proof of Claim~1. \medskip

In order to show $\ZZ_{\mathcal F}(G,x)=\QQ( R(G;\mathcal F),x)$, it suffices to show that mapping $m_k$ is bijective for any $k$. Note that in the case of $k=0$, a Clar cover $S$ is a perfect matching of $G$ and hence $m_k(S)$ is a vertex of $  R(G;\mathcal F)$. So the mapping $m_k$ is obviously bijective for $k=0$. In the following, assume that  $k$ is a positive integer. 

First, we show that $m_k$ is injective.  
Let $S$ and $S'$  be two different Clar covers from $\mathcal Z_k(G;\mathcal F)$. If $S\cap \mathcal F =S'\cap \mathcal F$, then the isolated edges of $S$ and $S'$ are different. So a perfect matching of $S$ is different from a perfect matching of $S'$.  Therefore, the vertex sets of $m_k(S)$ and $m_k(S')$ are disjoint. Hence $m_k(S)$ and $m_k(S')$ are disjoint induced subgraphs of $ R(G;\mathcal F)$. So $m_k(S)\neq m_k(S')$. Now suppose that $S\cap \mathcal F \ne S'\cap \mathcal F$. Note that $|S\cap \mathcal F|=|S'\cap \mathcal F|=k$.  So $S\cap \mathcal F$ has a face $f \notin S'\cap \mathcal F$. Note that the faces adjacent to $f$ do not all belong to $S'$ since the faces in $S'$ are independent. Hence the face $f$ contains at least one edge $e$ which does not belong to  $S'$. From the definition of the function $m_k$, the edge $e$ does not belong to those perfect matchings of $G$ that correspond to the vertices of $m_k(S')$. For any perfect matching $M$ corresponding to a vertex of $m_k(S)$, the face $f$ is $M$-alternating. Hence either $M$ or $M'=M\oplus E(f)$ contains $e$. Without loss of generality, assume that $e\in M'$. So $M'$ is not a vertex of $M_k(S')$. Since both $M$ and $M'$ are perfect matchings of $S$, both $M$ and $M'$ are vertices of $m_k(S)$. So $m_k(S) \neq m_k(S')$. This shows that $m_k$ is injective.

In the following, we are going to show that $m_k$ is surjective.
Let  $Q \in \mathcal Q_k(R(G;\mathcal F))$, isomorphic to a $k$-dimensional hypercube. Then every vertex $u$ of $Q$ can be represented by a binary string $(u_1, u_2, \ldots, u_k)$ such that two vertices of $Q$ are adjacent in $Q$ if and only if their binary strings differ in precisely one position. Label the vertices of $Q$ by $M^0 = (0,0,0, \ldots, 0)$, $M^1 = (1,0,0, \ldots, 0)$, $M^2 = (0,1,0, \ldots, 0)$, \ldots, $M^k = (0,0,0, \ldots, 1)$. So $M^0M^i$ is an edge of $R(G;\mathcal F)$ for every $i\in \{1,\ldots, k\}$. By definition of $ R(G;\mathcal F)$, the symmetric difference of  perfect matchings  $M^0$ and $M^i$ is the boundary of an even  face in $\mathcal F$, denoted by $f_i$. Then  we have a set of faces $\lbrace f_1, \ldots, f_k \rbrace\subseteq \mathcal F$. Note that $f_i\ne f_j$ for $i,j \in \lbrace 1, \ldots, k \rbrace$ and $i \neq j$ since $M^i \ne M^j$. Hence, all faces in $\{f_1,\ldots f_k\}$ are distinct. In order to show that $m_k$ is surjective, it is sufficient to show that $G$
has a Clar cover $S$ such that $S\cap \mathcal F=\{f_1,\ldots, f_k\}$.  \medskip

\noindent {\bf Claim 2.} {\sl All faces in $\{f_1,\ldots, f_k\}$ are pairwise disjoint.} \medskip

\noindent{\em Proof of Claim 2:}
Let $f_i,f_j \in \lbrace f_1, \ldots, f_k \rbrace$ with $i \neq j$ and let $W$ be a vertex of $Q$ having exactly two $1$'s which are in the $i$-th and $j$-th position. Then $M^0M^iWM^jM^0$ is a 4-cycle such that $E(f_i)=M^0\oplus M^i$ and $E(f_j)=M^0\oplus M^j$. Then by Lemma \ref{lem:4cycle}, it follows that $f_i$ and $f_j$ are disjoint. 

\medskip

By Claim 2, we only need to show that $G-\cup_{i=1}^k V(f_i)$ has a perfect matching $M$ so that $S=M\cup \{f_1,\ldots, f_k\}$ is a Clar cover of $G$. Consider the perfect matching $M^0$ corresponding to the vertex of $Q$ labelled by the string with $k$ zero's. Recall that $E(f_i)=M^0\oplus M^i$
and hence every $f_i$ is $M^0$-alternating for any $i\in \{1,2,\ldots, k\}$. Therefore, $M:=M^0\backslash (\cup_{i=1}^k E(f_i))$ is a perfect 
matching of $G-\cup_{i=1}^k V(f_i)$. So $S=M\cup \{f_1,\ldots, f_k\}$ is a Clar cover of $G$ such that $m_k(S)=Q$. This completes the proof of that $m_k(G)$ is surjective.

From the above, $m_k$ is a bijection between the set of all Clar covers of $G$ with $k$ facial cycles and the set of all labeled subgraphs isomorphic to the $k$-dimensional hypercube for any integer $k$. Therefore, we have $\ZZ_{\mathcal F}(G,x)=\QQ(R(G;\mathcal F),x)$ and this completes the proof of Theorem~\ref{main2}. \qed

\section{Concluding remarks}

Let $G$ be a graph embedded in a surface $\Sigma$ and let $\mathcal F$ be an even-face set. Assume that $H$ is a connected component of $R(G;\mathcal F)$. Let $G_H$ be the subgraph of $G$ induced by the faces corresponding to edges of $H$. 

\begin{proposition}
Let $G$ be a graph embedded in a surface $\Sigma$ and let $\mathcal F$ be an even-face set. If a perfect matching $M$ is  a vertex of a connected component $H$ of $R(G;\mathcal F)$, then $M\cap E(G_H)$ is a perfect matching of $G_H$.
\end{proposition}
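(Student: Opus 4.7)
Write $\{f_1,\ldots,f_k\}$ for the set of faces corresponding to the edges of $H$, so that, by construction, $V(G_H)=\bigcup_{i=1}^k V(\partial f_i)$ and $E(G_H)\supseteq \bigcup_{i=1}^k E(\partial f_i)$. Since $M\cap E(G_H)$ is automatically a matching, the goal is to show that the unique edge of $M$ incident with any prescribed vertex $v\in V(G_H)$ actually lies in $E(G_H)$. This will prove that $M\cap E(G_H)$ saturates every vertex of $G_H$, i.e. is a perfect matching of $G_H$.

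Fix $v\in V(G_H)$ and choose $j$ with $v\in V(\partial f_j)$. Since $f_j$ corresponds to an edge of $H$, there is an edge $M_1M_2$ of $H$ with $M_1\oplus M_2=E(f_j)$, so in particular $\partial f_j$ is $M_1$-alternating. As $M$ and $M_1$ lie in the same component $H$, I would pick any $M$--$M_1$ path $M=N_0,N_1,\ldots,N_s=M_1$ in $H$, and let $g_t$ be the face corresponding to the edge $N_tN_{t+1}$; each $g_t$ belongs to $\{f_1,\ldots,f_k\}$ and telescoping the identities $N_t\oplus N_{t+1}=E(g_t)$ yields
\[
M\oplus M_1 \;=\; \bigoplus_{t=0}^{s-1} E(g_t).
\]

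Now let $e$ and $e_1$ denote the edges of $M$ and $M_1$ incident with $v$, respectively. Since $\partial f_j$ is $M_1$-alternating and $v\in V(\partial f_j)$, we have $e_1\in E(\partial f_j)\subseteq E(G_H)$. There are two cases. If $e=e_1$, then $e\in E(G_H)$ directly. Otherwise $e\in M\setminus M_1\subseteq M\oplus M_1=\bigoplus_t E(g_t)$, so $e$ lies on an odd (hence positive) number of the faces $g_t$, and in particular $e\in E(\partial g_t)\subseteq E(G_H)$ for at least one $t$. In either case the $M$-edge covering $v$ belongs to $E(G_H)$, completing the argument.

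The main obstacle to watch out for is that one cannot assume $\partial f_j$ is $M$-alternating for the given $M\in V(H)$; the alternation property is only guaranteed at the endpoints of the edge of $H$ that corresponds to $f_j$. The role of connectedness of $H$ is precisely to let us transport $M$ to such an endpoint $M_1$ via a sequence of face-rotations, and then use the telescoping symmetric-difference identity above to transfer information about the edge covering $v$ back from $M_1$ to $M$. The rest is just parity bookkeeping at the single vertex $v$.
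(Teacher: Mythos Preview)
Your proof is correct and follows essentially the same route as the paper's: pick a face $f_j$ through $v$, use connectedness of $H$ to find a path to a vertex $M_1$ at which $f_j$ is alternating, and then use the telescoping identity $M\oplus M_1=\bigoplus_t E(g_t)\subseteq E(G_H)$ to conclude that the $M$-edge at $v$ lies in $E(G_H)$. The paper phrases this by contradiction and compresses your two cases into the single observation that $M$ and $M_1$ agree outside $E(G_H)$, but the argument is the same.
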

\begin{proof}
Let $M$ be a perfect matching corresponding to a vertex of $H$. Suppose to the contrary that $M\cap E(G_H)$ is not a perfect matching of $G_H$. Then $G_H$ has a vertex $v$ which is not covered by $M\cap E(G_H)$. Let $f\in \mathcal F$ be a face containing $v$, which corresponds to an edge of $H$. Then $f$ is $M'$-alternating for some perfect matching $M'$ which is a vertex of $H$. Since  $H$ is connected, there is a path $P$ of $H$ joining $M$ and $M'$. Assume that the faces corresponding to the edges of $P$ are $f_1,\ldots, f_k$. Then  $M=M'\oplus E(f_1)\oplus \ldots \oplus E(f_k)$. Note that $E(f_i)\subseteq E(G_H)$ for all $i\in \{1,\ldots, k\}$. So $v$ is incident with an edge in $M\cap E(G_H)$, a contradiction. This completes the proof.
\end{proof}

For a connected component $H$ of $R(G;\mathcal F)$, if the union of all faces corresponding to edges of $H$ is homeomorphic to a closed disc, then $G_H$ with the embedding inherited from the embedding from $G$ in $\Sigma$ is a plane elementary bipartite graph. By Theorem \ref{thm:plane}, we have the following proposition. 

\begin{proposition}
Let $G$ be a graph on a surface $\Sigma$ and  $\mathcal F$ be an even-face set. Assume that $H$ is a connected component of $R(G;\mathcal F)$. If the union of all faces corresponding to edges of $H$ is homeomorphic to a closed disc, then $H$ is the covering graph of a distributive lattice. 
\end{proposition}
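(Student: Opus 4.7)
The plan is to reduce to Theorem~\ref{thm:plane}. Let $\mathcal{F}_H$ denote the set of faces of $G$ corresponding to edges of $H$; by assumption $D := \bigcup_{f \in \mathcal{F}_H} f$ is a closed disc, so $G_H$ with the embedding inherited from $\Sigma$ is naturally a plane graph whose bounded faces are exactly the elements of $\mathcal{F}_H$ and whose outer face is bounded by $\partial D$, a cycle in $G_H$. I would first verify that $G_H$ is a plane elementary bipartite graph: bipartiteness is immediate because every bounded face is even; connectedness of $G_H$ follows from the path-connectedness of $D$; matching-coveredness (which is equivalent to elementariness for connected bipartite graphs) follows by combining the preceding proposition with the observation that every $f \in \mathcal{F}_H$ is $M$-alternating for some $M \in V(H)$, since then both $M \cap E(G_H)$ and $(M \cap E(G_H)) \oplus E(f)$ are perfect matchings of $G_H$ together covering every edge of $f$.

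The core step is to show that the restriction map $\rho \colon V(H) \to V(R(G_H; \mathcal{F}_H))$ defined by $\rho(M) = M \cap E(G_H)$ is a graph isomorphism. Adjacency preservation in both directions is immediate since $\mathcal{F}_H \subseteq \mathcal{F}$ and the rotation of a face $f \in \mathcal{F}_H$ only affects edges in $E(G_H)$. For injectivity, take $M_1, M_2 \in V(H)$: walking along any path in $H$ between them shows $M_1 \oplus M_2$ is a symmetric difference of boundaries of faces in $\mathcal{F}_H$, hence lies in $E(G_H)$, so $\rho(M_1) = \rho(M_2)$ forces $M_1 = M_2$. For surjectivity, fix $M_0 \in V(H)$ and let $N$ be any perfect matching of $G_H$; set $M := N \cup (M_0 \setminus E(G_H))$. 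By the preceding proposition, $M_0 \setminus E(G_H)$ is a perfect matching of $G - V(G_H)$, so $M$ is a perfect matching of $G$. The symmetric difference $M \oplus M_0$ lies in $E(G_H)$ and decomposes into $M_0$-alternating cycles of the plane elementary bipartite graph $G_H$, each of which is a symmetric difference of inner facial cycles (a standard fact for plane elementary bipartite graphs). Rotating these inner faces one at a time produces a walk in $H$ from $M_0$ to $M$, so $M \in V(H)$ with $\rho(M) = N$.

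Once $H \cong R(G_H; \mathcal{F}_H)$ is established, Theorem~\ref{thm:plane} applied to $G_H$ immediately yields that $H$ is the covering graph of a distributive lattice. The step that requires the most care is the surjectivity argument: the decomposition of $M \oplus M_0$ into inner facial cycles of $G_H$ must be lifted to a sequence of face rotations in $G$ that stays inside the connected component $H$. This hinges on two points — that $M_0$ and $\rho(M_0)$ agree on $E(G_H)$ so the alternating character of faces in $\mathcal{F}_H$ is the same in $G$ and in $G_H$, and that each rotated face belongs to $\mathcal{F}_H \subseteq \mathcal{F}$, so each rotation is a genuine edge of $R(G; \mathcal{F})$.
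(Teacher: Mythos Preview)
Your argument is correct and matches the paper's approach: recognise $G_H$ as a plane elementary bipartite graph and apply Theorem~\ref{thm:plane}. The paper gives only a one-line justification before stating the proposition, so your explicit construction of the isomorphism $H\cong R(G_H;\mathcal F_H)$ via the restriction map $\rho$ in fact supplies details the paper leaves implicit. One small point of care in your surjectivity step: writing an alternating cycle of $G_H$ as a $\mathbb{Z}_2$-sum of inner facial boundaries does not by itself guarantee that those faces can be rotated \emph{one at a time} through perfect matchings; the clean fix is simply to invoke the connectedness of $R(G_H;\mathcal F_H)$ (an immediate consequence of Theorem~\ref{thm:plane}) and lift a path from $\rho(M_0)$ to $N$ edge-by-edge to a path in $H$.
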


It has been evident in \cite{tr-zp-2} that, if the subgraph induced by faces in $\mathcal F$ is non-bipartite, a connected component of $R(G;\mathcal F)$ may not be the covering graph of a distributive lattice.  But the condition in Proposition 5.2 is not a necessary condition. It has been shown in \cite{tr-zi} that a connected component of an annulus graph (a plane graph excluding two faces) could be the covering graph of a distributive lattice. 
It is natural to ask what is the necessary and sufficient condition for $\mathcal F$ so that every connected component of $R(G;\mathcal F)$ is the covering graph of a distributive lattice.  

But so far, in all examples we have, a connected component of $R(G;\mathcal F)$ is always a median graph. Therefore, we risk the following conjecture.

\begin{conj}
Let $G$ be a graph embedded in a surface and let $\mathcal F \ne \CMcal F(G)$ be an even-face set. Then every connected component of the resonance graph $ R(G;\mathcal F)$ is a median graph.
\end{conj}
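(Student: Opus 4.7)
By Theorem~\ref{main1} and the embedding $\ell$ constructed in Theorem~\ref{key}, every connected component $H$ of $R(G;\mathcal F)$ is already a connected induced subgraph of a hypercube $Q_k$. I plan to invoke the classical fact (Bandelt, 1984; cf.\ the textbook of Hammack, Imrich and Klav\v{z}ar) that a connected induced subgraph of a hypercube is a median graph if and only if its image is closed under the ternary median operation of the ambient hypercube. Hence the conjecture reduces to the following statement: \emph{for any three perfect matchings $M_1,M_2,M_3\in V(H)$, the coordinate-wise majority $\mu:=\operatorname{med}(\ell(M_1),\ell(M_2),\ell(M_3))\in Q_k$ lies in $\ell(V(H))$.}

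\textbf{Constructing the median.} Fix $j\in\{1,2,3\}$ and set $S_j:=\{i:\ell_i(M_j)\neq\mu_i\}$; by the definition of majority, $S_j$ consists of exactly those coordinates on which $M_j$ disagrees with \emph{both} of the other two matchings. The natural candidate for the preimage of $\mu$ is
\[M\;:=\;M_j\,\oplus\,\bigoplus_{i\in S_j}E(f_i),\]
reached from $M_j$ by rotating the faces $\{f_i:i\in S_j\}$ in some valid order. If these rotations can be performed successively so that every intermediate object is a perfect matching and a vertex of $H$, then $M\in V(H)$; a direct check using the bipartition of the quotient graph $\mathcal H_i$ in the definition of $\ell_i$ then shows $\ell(M)=\mu$, since each $i\in S_j$ is flipped exactly once and every other coordinate is never flipped. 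Uniqueness follows from injectivity of $\ell$.

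\textbf{Key reduction.} The entire problem thus reduces to the following claim: \emph{the faces $\{f_i:i\in S_j\}$ admit an ordering $f_{i_1},\ldots,f_{i_r}$ such that, starting from $M_j$ and rotating them one by one, each $f_{i_t}$ is alternating with respect to the current matching and each intermediate matching remains in $V(H)$.} I would attempt this by induction on $|S_j|$. The cases $|S_j|\le 1$ are immediate, and $|S_j|=2$ is already covered by Lemma~\ref{lem:4cycle}, which says that two faces arising from a $4$-cycle of $H$ are disjoint and simultaneously alternating. For the inductive step, one should identify a single face $f_{i_0}\in\{f_i:i\in S_j\}$ that is $M_j$-alternating and whose rotation produces a matching $M'\in V(H)$ with $\ell(M')$ agreeing with $\ell(M_j)$ on all coordinates except $i_0$. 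The triple $(M',M_{j'},M_{j''})$ then has associated index set $S_j\setminus\{i_0\}$ from $M'$'s perspective, and the induction hypothesis applies.

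\textbf{Main obstacle.} The hard step is producing the initial face $f_{i_0}$: one must exclude geometric obstructions on the surface---three faces in $S_j$ that are pairwise disjoint but share a common vertex, or a cyclic dependence that forces rotations to occur simultaneously---which could block an alternating-preserving peeling even when $\mu$ is a genuine median of vertices of $V(H)$. A promising route is to strengthen Lemma~\ref{lem:4cycle} to induced $3$-cubes in $H$: if $Q_3$ embeds as an induced subgraph of $H$, then the three faces indexing its $\Theta$-classes are pairwise vertex-disjoint and alternating with respect to any one of its eight vertices. Combining such a strengthening with the hypothesis $\mathcal F\neq\CMcal F(G)$---which is what eliminates the obstruction visible in Figure~\ref{4_cyc}---should give the required inductive handle. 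Nevertheless, producing such a $3$-cube from an arbitrary triple of vertices of $H$ is likely the subtlest point and may require new ideas, perhaps a convex-expansion construction of $H$ in the spirit of Mulder's theorem for median graphs.
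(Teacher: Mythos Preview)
The statement you are attempting to prove is a \emph{conjecture} in the paper; the authors do not offer a proof and explicitly flag it as open.  More importantly, your strategy cannot succeed as written, and the obstruction is precisely the example the paper analyses in its concluding remarks (the coronene-like plane graph of Figure~\ref{koronen_res}).

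Your plan is to show that the image $\ell(V(H))$ of the embedding from Theorem~\ref{key} is closed under the coordinate-wise median of $Q_k$, and then invoke the fact that a connected induced median-closed subgraph of a hypercube is a median graph.  That implication is correct, but it carries a hidden consequence: any connected median-closed induced subgraph of $Q_k$ is automatically an \emph{isometric} subgraph.  Indeed, given $u,v\in S$ with $d_{Q_k}(u,v)=d\ge 2$, connectedness of $S$ together with median-closedness produces (via $\operatorname{med}(u,v,z)$ for a suitable $z\in S$) a vertex $m\in S\setminus\{u,v\}$ lying on a $Q_k$-geodesic from $u$ to $v$; induction on $d$ then gives $d_S(u,v)=d$.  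Consequently, if your ``Key reduction'' were provable, the map $\ell$ would be an isometric embedding for every $G$ and every $\mathcal F\ne\CMcal F(G)$.

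The paper shows this is false.  For the graph in Figure~\ref{koronen_res} with $\mathcal F=\{h_1,\dots,h_7\}$, the authors exhibit vertices $u,v$ of $R(G;\mathcal F)$ with $d_{R(G;\mathcal F)}(u,v)=8$ while $\ell(u)$ and $\ell(v)$ differ in only six coordinates.  Thus $\ell$ is not isometric here, so $\ell(V(H))$ is \emph{not} median-closed in $Q_7$---yet $G$ is a plane elementary bipartite graph, so by Theorem~\ref{thm:plane} the resonance graph \emph{is} a median graph.  In other words, the conjecture holds for this example, but the statement you reduce to is false for it; your inductive ``peeling'' of faces in $S_j$ must therefore fail for some triple $M_1,M_2,M_3$, and no amount of strengthening Lemma~\ref{lem:4cycle} to $3$-cubes can rescue it.  This is exactly why the paper remarks that ``the proof of Theorem~\ref{key} may not be adapted to show that a connected component of $R(G;\mathcal F)$ is a partial cube, nor a median graph.''  A proof of the conjecture, if one exists, will need an embedding different from $\ell$---one that respects the $\Theta$-classes of $H$ rather than the faces of $G$---or an argument that avoids a hypercube embedding altogether.
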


In order to prove the above conjecture or improve Theorem~\ref{main1}, some new idea different from what we have in the proof of Theorem~\ref{key} is required. Also, it would be interesting to show that one can embed a connected component of $R(G;\mathcal F)$ into a hypercube as an isometric subgraph, which would be a weaker result.


\begin{figure}[h!]
\begin{center}
\includegraphics[scale=0.59]{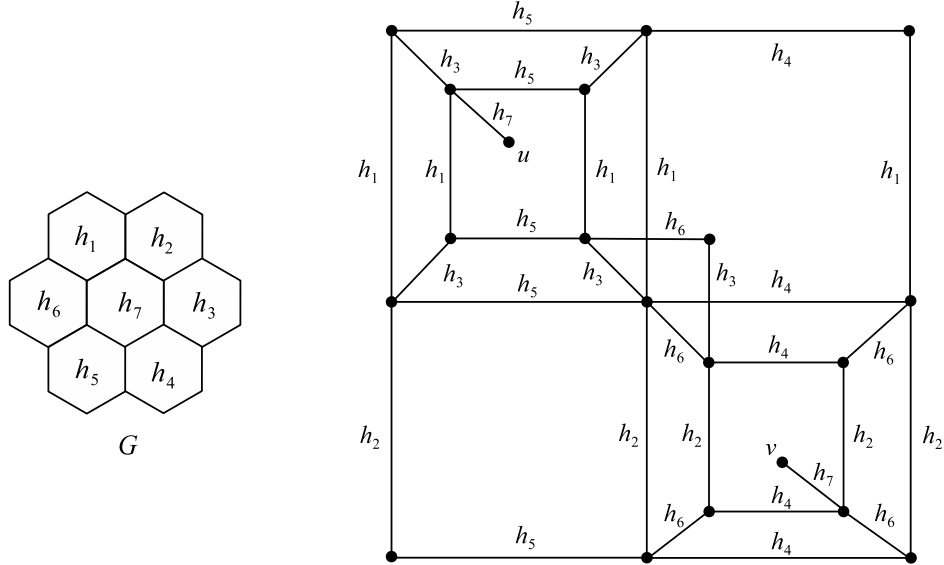}
\end{center}
\caption{\label{koronen_res}  A plane graph $G$ and the resonance graph $R(G;\mathcal F)$.}
\end{figure}

Note that, the embedding function $\ell$ given in Equation (\ref{mapping}) and used in the proof of Theorem~\ref{key} is not always an isometric embedding. For example, let $G$ be a plane graph as shown on the left in Figure \ref{koronen_res} and let $\mathcal F$ be the set of all inner faces, i.e. $\mathcal F=\{h_1,\ldots,h_7\}$. Then the resonance graph $R(G;\mathcal F)$ is the graph shown on the right in Figure \ref{koronen_res}. Let $E_i$ be the set of edges of $R(G;\mathcal F)$ corresponding to the face $h_i$ for $i\in \{1,\ldots, 7\}$. Note that the resonance graph $R(G;\mathcal F)\backslash E_i$ for $i \in \lbrace 1, \ldots, 6 \rbrace$ has exactly two connected components and the vertices $u$ and $v$ of $R(G;\mathcal F)$ belong to different connected components. Therefore, the binary strings  $\ell(u)$ and $\ell(v)$ differ in the first six positions. But the subgraph $R(G;\mathcal F) \backslash E_7$ has three connected components and  vertices $u$ and $v$ belong to two components that are not connected by any edge in the quotient graph. Therefore, the binary strings  $\ell(u)$ and $\ell(v)$ have the same number in the last position. So $\ell(u)$ and $\ell(v)$ differ exactly in six positions. However, the distance between $u$ and $v$ in $R(G;\mathcal F)$ is eight. So the embedding $\ell$  is not isometric. Therefore, the proof of Theorem~\ref{key} 
may not be adapted to show that a connected component of $R(G;\mathcal F)$ is a partial cube, nor a median graph.

\end{document}